\documentclass{amsart}
\usepackage{amsmath}
\usepackage{amsxtra}
\usepackage{amscd}
\usepackage{amsthm}
\usepackage{amsfonts}
\usepackage{amssymb}
\usepackage{mathdots}
\usepackage{bbm}
\usepackage{graphicx}
\usepackage{longtable,booktabs}
\usepackage{cite}
\usepackage{tikz-cd}
\usepackage[bookmarksnumbered, colorlinks, plainpages]{hyperref}
\usepackage[utf8]{inputenc}
\usepackage{color}
\usepackage{enumerate}
\newtheorem{theorem}{Theorem}[section]

\newtheorem{lemma}[theorem]{Lemma}
\newcommand{\Sp}{\mathrm{Sp}}
\newcommand{\C}{\mathbb{C}}

\newcommand{\Id}{I}
\newcommand{\half}{\tfrac12}
\newcommand{\quarter}{\tfrac14}

\newcommand{\N}{\mathbb N}
\newcommand{\Ocal}{\mathcal O}

\newcommand{\diag}{\operatorname{diag}}

\newcommand{\hol}{\mathrm{hol}}

\newcommand{\holo}{\mathcal{O}}

\newtheorem{Thm}{Theorem}[section]
\newtheorem{Cor}[Thm]{Corollary}

\newtheorem{proposition}[theorem]{Proposition}
\theoremstyle{definition}
\newtheorem{definition}[theorem]{Definition}
\newtheorem{remark}[theorem]{Remark}

\usepackage[paperwidth=17.10cm, paperheight=24.60cm, left=1.5cm, right=1.5cm, top=2cm, bottom=2cm]{geometry}

\usepackage[]{amsrefs}
\usepackage{etoolbox}
\patchcmd{\section}{\scshape}{\large}{}{}
\patchcmd{\subsection}{\bfseries}{\normalfont}{}{}
\patchcmd{\subsubsection}{\itshape}{\normalfont}{}{}

\makeatletter
\def\@setauthors{%
	\begingroup
	\def\thanks{\protect\thanks@warning}%
	\trivlist
	\centering\footnotesize \@topsep30\p@\relax
	\advance\@topsep by -\baselineskip
	\item\relax\normalsize 
	\author@andify\authors
	\def\\{\protect\linebreak}%
	\authors%
	\ifx\@empty\contribs
	\else
	,\penalty-3 \space \@setcontribs
	\@closetoccontribs
	\fi
	\endtrivlist
	\endgroup
}
\def\@settitle{\begin{center}%
		\baselineskip14\p@\relax
		\bfseries\large
		\@title
	\end{center}%
}
\makeatother

\usepackage{titlesec}
\titleformat{\section}
{\normalfont\large\bfseries}
{\thesection}{1em}{}
\titleformat{\subsection}
{\normalfont\large\bfseries}
{\thesubsection}{1em}{}
\titleformat{\subsubsection}
{\normalfont\large\bfseries}
{\thesubsubsection}{1em}{}

\usepackage{tikz,xcolor,hyperref}

\definecolor{lime}{HTML}{A6CE39}
\DeclareRobustCommand{\orcidicon}{%
	\begin{tikzpicture}
		\draw[lime, fill=lime] (0,0) 
		circle [radius=0.16] 
		node[white] {{\fontfamily{qag}\selectfont \tiny ID}};
		\draw[white, fill=white] (-0.0625,0.095) 
		circle [radius=0.007];
	\end{tikzpicture}
	\hspace{-2mm}
}
\newcommand{\orcidFrank}{\href{https://orcid.org/0000-0003-2272-8687}{\orcidicon}}
\newcommand{\orcidSon}{\href{https://orcid.org/0000-0002-9560-6392}{\orcidicon}}
\newcommand{\orcidFeng}{\href{https://orcid.org/0009-0003-4512-2888}{\orcidicon}}
\newcommand{\orcidDung}{\href{https://orcid.org/0000-0001-6552-724X}{\orcidicon}}

\begin{document}
	\title[Parametric decompositions into products of involutions ]{Parametric Decompositions into Products of Involutions}
    \author[Gaofeng Huang]{Gaofeng Huang \textsuperscript{0,$\natural$}}
	\author[Frank Kutzschebauch]{Frank Kutzschebauch\textsuperscript{1,$\S$,*}}
	\author[Tran Nam Son]{Tran Nam Son\textsuperscript{2,\ddag}}
	\author[Truong Huu Dung]{Truong Huu Dung\textsuperscript{2,\dag}}
	\keywords{product of involutions, Stein Spaces, Oka principle, Gromov-Vaserstein problem}
	\subjclass[2020]{Primary: 32Q56; Secondary: 15A23; 15A54; 20H25; 32E10 }	
	\thanks{The first author was funded by Schweizerischer Nationalfonds Postdoc.Mobility P500-$2\_239113$. The second author was partially funded by Schweizerischer Nationalfonds grant 10005432. The third and fourth authors were supported by the Vietnam National Foundation for Science and Technology Development (NAFOSTED) under Grant No. 101.04-2025.41.}

	\maketitle

	\begin{center}
		{\small 
        *Corresponding author\\
            \textsuperscript{0}Department of Mathematics, \\ Harvard University,\\
            Cambridge, USA\\
			\textsuperscript{1}Mathematical Institute,\\ University of Bern,\\ Bern, Switzerland\\
			\textsuperscript{2}Department of Mathematics,\\ Dong Nai University,\\ 9 Le Quy Don Str., Tam Hiep Ward,\\ Dong Nai City, Vietnam\\
			\textsuperscript{$\natural$}ghuang@math.harvard.edu,\\ \orcidFeng \url{https://orcid.org/0009-0003-4512-2888}\\
            \textsuperscript{$\S$}frank.kutzschebauch@unibe.ch,\\ \orcidFrank \url{https://orcid.org/0000-0003-2272-8687}\\
			\textsuperscript{\ddag}trannamson1999@gmail.com; sontn@dnpu.edu.vn,\\ \orcidSon \url{https://orcid.org/0000-0002-9560-6392}\\
			\textsuperscript{\dag}dungth0406@gmail.com; thdung@dnpu.edu.vn,\\ \orcidDung\url{https://orcid.org/0000-0001-6552-724X}
		}
	\end{center}

    	\begin{abstract}
		It is a well-known result of Gustafson, Halmos and Radjavi, dating back to 1976, that any matrix $A$ in $SL_n(\mathbb{C})$ is a product of at most 4 involutions. We consider a natural continuous and holomorphic parameter dependence of this result in the spirit of Vaserstein and Gromov.
		Our main result shows that null-homotopy characterizes exactly those matrices in the special linear group over rings of continuous or holomorphic functions that are finite products of involutions. We give an upper bound $I(n, d)$, depending only on the size $n$ of the matrices and the dimension $d$ of the parameter space, for the number of involutions needed in the factorization. Our upper bound is not optimal, however we show that for fixed $n$ the optimal upper bound tends to infinity as $d\to \infty$. In some cases we are able to determine the optimal bound.
	\end{abstract}

	\section{Introduction}

	It is a well-known result of Gustafson, Halmos and Radjavi, dating back to 1976, that any matrix $A$ in $SL_n(\C)$ is a product of at most 4 involutions \cite{MR0399284}. Here an involution is a matrix $B$ with $B^2 = Id$. Clearly an involution has determinant $+1$ or $-1$. If the matrix $A$ depends continuously on a parameter $t$ from a topological space $T$, or holomorphically on a parameter $x$ from a Stein space $X$, the situation is much more complicated. We prove that the null-homotopy of the corresponding map $f: T \to SL_n (\C)$ is a necessary and sufficient condition for it to be a product of involutions.
    
    \begin{Thm}\label{main-continuous} Let $T$ be a normal finite-dimensional topological space and denote by $\mathcal{C}(T)$ the ring of continuous complex-valued functions on $T$. An element of $SL_n (\mathcal{C}(T))$ is a product of continuous  involutions if and only if it is null-homotopic, i.e.\ the corresponding continuous map $f: T \to SL_n (\C)$ is homotopic to a constant map.
    \end{Thm}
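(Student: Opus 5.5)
Necessity first. A continuous involution $B\colon T\to GL_n(\C)$ is conjugate at each point to $\diag(\Id_k,-\Id_{n-k})$, and the whole set of involutions is a union of such conjugacy classes, which are path connected. That alone does not make a product of involutions null-homotopic, so I would use a different argument. On each connected component of $T$ the rank $k$ of the $(+1)$-eigenspace of $B$ is locally constant, and the projection $P=(\Id+B)/2$ is a continuous idempotent. The path $B_s=\Id-(1+e^{i\pi s})P$ satisfies $B_0=B$ and $B_1=\Id$, and each $B_s$ is invertible, since its eigenvalues are $1$ and $-e^{i\pi s}\neq 0$. This gives a homotopy in $GL_n(\C)$ from $B$ to $\Id$. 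Now let $f=B_1\cdots B_m$, taking values in $SL_n(\C)$. Deforming each factor in this way gives a homotopy of $f$ to $\Id$ inside $GL_n(\C)$. To get one inside $SL_n(\C)$, I compose with the retraction $GL_n(\C)\to SL_n(\C)$. This retraction is not canonical, but it suffices to know that $\pi_0$ and homotopy classes of maps into $SL_n$ inject into those into $GL_n$: $GL_n(\C)\cong SL_n(\C)\times\C^*$ as spaces, via $A\mapsto (A\,\diag(\det A^{-1},1,\dots,1),\det A)$. The $SL_n$-component of the homotopy is a homotopy in $SL_n(\C)$ from $f$ to $\Id$, because $f$ already has determinant $1$.

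Sufficiency. Here I would reduce to the theorem of Vaserstein: a null-homotopic map $T\to SL_n(\C)$ on a finite-dimensional normal space is a finite product of unipotent elementary matrices $E_{ij}(a)=\Id+a e_{ij}$ with continuous $a$. It then suffices to show that a single elementary matrix with continuous entry is a product of a bounded number of continuous involutions. For $n\ge 2$ I would use the identity
\[
E_{12}(a)=\begin{pmatrix}1&a\\0&-1\end{pmatrix}\begin{pmatrix}1&0\\0&-1\end{pmatrix},
\]
padded by the identity in the other coordinates. Both factors square to the identity, since $\begin{pmatrix}1&a\\0&-1\end{pmatrix}^2=\begin{pmatrix}1&a-a\\0&1\end{pmatrix}=\Id$. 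Conjugating by constant permutation matrices handles general $(i,j)$. So each elementary factor is a product of two involutions depending continuously on $t$, and $f$ is a finite product of continuous involutions. The case $n=1$ is trivial, since $SL_1=\{1\}$.

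The only deep input is the factorization of null-homotopic maps into elementary matrices (Vaserstein, 1988), so that is the main obstacle; I would invoke it rather than reprove it. The elementary step is easy. The one thing to check is that Vaserstein's hypotheses (normal, finite covering dimension) match the hypotheses of the theorem.
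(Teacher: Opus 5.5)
Your argument is correct, up to two harmless sign slips. First, with $P=(\Id+B)/2$ one gets $B_0=\Id-2P=-B$. You should take $P=(\Id-B)/2$ instead, which is the idempotent of Proposition~\ref{prop:exp}; then $B_s=\exp(i\pi(1+s)P)$. Second, the displayed product equals $E_{12}(-a)$, so the first factor should carry $-a$ in place of $a$.

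For sufficiency you follow the paper's strategy: Theorem~\ref{vaserstein} plus a splitting of unipotent factors into involutions. Your splitting is more elementary. You break each unitriangular factor into transvections and use $E_{ij}(a)=(E_{ij}(a)D)D$, which is the trick the paper uses only for $n=2$ in Proposition~\ref{prop:continuous-rank-two}. This is enough for the qualitative equivalence, but it costs up to $n(n-1)$ involutions per unitriangular factor. The paper instead writes an arbitrary unitriangular matrix as four involutions, by conjugating $B^{-1}A$ to $B^{-1}$ via Lemma~\ref{BiSo}. Combined with the regrouping of Proposition~\ref{proposition:e-t-estimate}, this gives the bound $4(\lfloor L/2\rfloor+1)$, which depends on $n$ only through $L(n,d)$ and yields $\lim_{n\to\infty}I(n,d)\le 16$.

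For necessity your route genuinely differs and is simpler. You contract each involution inside $GL_n(\C)$ and then push the homotopy into $SL_n(\C)$ by the retraction $A\mapsto A\,\diag(\det A^{-1},1,\dots,1)$. The paper (Proposition~\ref{necessary}) instead builds explicit homotopies that keep every factor in its own determinant component $S^\pm$. That forces it to split $T$ into the clopen sets where the multiplicities $k_i$ are constant, and to count the factors with odd $k_i$. Your retraction removes all of that bookkeeping. In fact, using the retraction $A\mapsto A\,\diag(-\det A^{-1},1,\dots,1)$ onto $S^-$, your method also recovers Proposition~\ref{necessary} itself.
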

    
    The same holds for holomorphic maps from a Stein space to 
    $SL_n(\C)$. We remind the reader that, by Grauert's Oka principle, null-homotopy through continuous maps is equivalent to null-homotopy through holomorphic maps.

\begin{Thm}\label{main-holomorphic} Let $X$ be a reduced Stein  space and denote by $\holo (X)$ the ring of holomorphic  functions on $X$. An element of $SL_n (\holo (X) )$ is a product of holomorphic involutions  if and only if it is null-homotopic, i.e.\ the corresponding  map $f: X \to SL_n (\C)$ is homotopic to a constant map.
    \end{Thm}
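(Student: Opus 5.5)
The statement to prove is Theorem~\ref{main-holomorphic}; I would deduce it from the holomorphic Vaserstein problem, i.e.\ the theorem of Ivarsson and Kutzschebauch. That theorem says a null-homotopic holomorphic map $f\colon X\to SL_n(\C)$ on a finite-dimensional reduced Stein space is a finite product of holomorphic unipotent lower and upper triangular matrices, with a bound depending only on $n$ and $\dim X$. The proof has two directions, treated separately below.

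\textbf{Necessity.} Suppose $f=\sigma_1\cdots\sigma_k$, where each $\sigma_i\colon X\to GL_n(\C)$ is holomorphic with $\sigma_i^2=\Id$. Since $\det\sigma_i=\pm1$ is locally constant, on each connected component of $X$ every factor has a fixed determinant. An involution is diagonalizable with eigenvalues $\pm1$, so the set of involutions of a given trace is a single conjugacy class $GL_n(\C)/(GL_p\times GL_q)$. The map $\sigma_i$ takes values in one such class per component, and $GL_n(\C)$ is connected. So one would like to contract each $\sigma_i$ to a constant inside the space of involutions, but that fails in general, since the class has nontrivial topology.

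A better route for necessity is the following. Each involution $\sigma$ factors as $\sigma=\Id-2P$, where $P=(\Id-\sigma)/2$ is a holomorphic idempotent. The homotopy $\Id-(1-e^{i\pi s})P$ consists of invertible matrices (eigenvalues $1$ and $e^{i\pi s}$), and it joins $\sigma$ to $\Id$. Multiplying these homotopies gives a homotopy from $f$ to the constant $\Id$ inside $GL_n(\C)$. Finally, $SL_n$ is a deformation retract of $GL_n$ through the fibration $\det$ with fiber $SL_n$. To land in $SL_n$, divide by an $n$-th root of the determinant path; this is possible because the determinant homotopy is $\prod_i e^{i\pi s\,\mathrm{rk}P_i}$, an explicit exponential, so its $n$-th root is continuous on $X\times[0,1]$. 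Hence $f$ is null-homotopic in $SL_n(\C)$.

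\textbf{Sufficiency.} Suppose $f$ is null-homotopic. By Ivarsson--Kutzschebauch (and Grauert's Oka principle, so that holomorphic and continuous null-homotopy agree), $f$ is a finite product of unipotent triangular matrices $\Id+N$ with $N$ holomorphic and strictly triangular. It then suffices to write each such factor as a bounded product of holomorphic involutions. The key step is the identity
\[
\begin{pmatrix}1&a\\0&1\end{pmatrix}=\begin{pmatrix}1&a\\0&-1\end{pmatrix}\begin{pmatrix}1&0\\0&-1\end{pmatrix},
\]
in which both factors are involutions for every value of $a$. More generally, $\Id+N=(D+ND)\cdot D$ for a suitable constant diagonal involution $D$, provided $D+ND$ squares to $\Id$. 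For the reduction it is easiest to decompose the strictly triangular $N$ into elementary matrices $\Id+a E_{ij}$, which is possible with a bound depending only on $n$. Each elementary factor is then the product of the two involutions $(D_j+aE_{ij})$ and $D_j$, where $D_j=\diag(1,\dots,-1,\dots,1)$ has $-1$ in position $j$ and $i\ne j$. Checking the first factor: $(D_j+aE_{ij})^2=\Id+a(D_jE_{ij}+E_{ij}D_j)=\Id+a(E_{ij}-E_{ij})=\Id$. This gives a bound $I(n,d)$.

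\textbf{Main obstacle.} Necessity is formal once the path-of-idempotents trick is in place. The real work is the input result, the holomorphic Vaserstein problem, which I assume. If $X$ is infinite-dimensional, the Ivarsson--Kutzschebauch theorem is stated for finite dimension. In that case I would exhaust $X$ by relatively compact Stein domains, since finiteness of the product only needs to hold globally. If that fails, I would restrict to finite-dimensional $X$, which is where the uniform bound $I(n,d)$ lives.
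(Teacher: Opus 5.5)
Your argument is correct up to a few small slips, and for sufficiency it takes a different route from the paper.

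\textbf{Sufficiency.} You and the paper both use Ivarsson--Kutzschebauch to reduce to unitriangular factors. The paper then proves that every unitriangular matrix over a ring with an infinite central subfield is a product of four involutions. It writes $A=B(B^{-1}A)$, where $B$ is an explicit product of two involutions and $B^{-1}A$ is similar to $B^{-1}$ by Lemma~\ref{BiSo}. It then regroups alternating factors, $U_1U_2U_3=(U_1U_2U_1^{-1})(U_1U_3)$, to reach at most $4(\lfloor K/2\rfloor+1)$ involutions.

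You instead split each unitriangular factor into at most $n(n-1)/2$ transvections and write each transvection as a product of two involutions. This is more elementary: it holds over any ring and needs no diagonalization lemma. It is enough for the qualitative theorem. The cost is about $n(n-1)K$ involutions, a bound that grows with $n$. The paper's bound depends on $n$ only through $K$, and by Dennis--Vaserstein it stays $\le 16$ for large $n$.

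\textbf{Necessity.} Your argument rotates eigenvalues via $\exp(i\pi sP_j)$ and then corrects the determinant by a scalar. This is essentially the mechanism of Proposition~\ref{necessary}. The paper builds the scalar into each factor's homotopy so that it lands in $S^\pm$, and uses that an even number of factors have determinant $-1$.

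\textbf{Slips to fix.}

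First, your displayed identity has the wrong sign:
\[
\begin{pmatrix}1&a\\0&-1\end{pmatrix}\begin{pmatrix}1&0\\0&-1\end{pmatrix}=\begin{pmatrix}1&-a\\0&1\end{pmatrix}.
\]
In general $(D_j+aE_{ij})D_j=\Id-aE_{ij}$, so use $D_j-aE_{ij}$ instead.

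Second, $SL_n(\C)$ is not a deformation retract of $GL_n(\C)$; their fundamental groups are $0$ and $\mathbb{Z}$. It is only a retract, via $A\mapsto A\,\diag(\det A^{-1},1,\dots,1)$. That is all you need: composing your $GL_n$-homotopy with this retraction already gives a null-homotopy in $SL_n(\C)$.

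Third, your explicit scalar correction ends at $\omega f$ rather than $f$. Here $\omega=e^{-i\pi m/n}$ is a locally constant $n$-th root of unity, since $m$ is even because $\det f=1$. So you need one more homotopy from $\omega\Id$ to $\Id$ inside $SL_n(\C)$.

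Fourth, drop the exhaustion idea for infinite-dimensional $X$. The minimal number of involutions tends to infinity with the dimension. Hence a disjoint union of Stein spaces of unbounded dimension carries null-homotopic maps that are not finite products of involutions. The theorem has to be read for finite-dimensional $X$, which is exactly the setting in which the paper invokes Theorem~\ref{IK-SL}.
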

    
    The necessity of both results follows from the fact that, writing $Inv_n=\{ B \in GL_n (\C) : B^2 = I \}$ for the set of involutions and $Inv_n^\pm$ for its subsets of determinant $\pm1$, the inclusion $Inv_n^+ \hookrightarrow SL_n(\C)$ is null-homotopic, and so is the inclusion $Inv_n^- \hookrightarrow N \cdot SL_n(\C)$, where $N$ is the diagonal matrix with diagonal entries $(-1,1,1,\ldots,1)$; see Proposition~\ref{necessary}.
    
    For the sufficiency we combine Vaserstein's factorization result and, respectively, the solution of the Gromov--Vaserstein problem with a general result on the factorization of unitriangular matrices into products of involutions. Here a unitriangular factor means either an upper or a lower unipotent triangular matrix. In fact, we prove that there is a uniform bound on the number of factors, depending only on the dimension of the parameter space and the size of the matrix. 
    Finding the exact values of such uniform bounds is an interesting problem that is still not well understood. For an account of similar problems in the parametric factorization of matrices, see the recent survey \cite{MR5024280}. We prove that as the dimension of the parameter space grows to infinity, the minimal number of involutions needed to write such a parametric matrix also grows to infinity; see the end of Subsections~\ref{sufficient-cont} and \ref{sufficient-holo}.
    In the last section we determine the exact numbers for
    $SL_2$ over topological and Stein parameter spaces of dimension one. Surprisingly, for smooth open Riemann surfaces the number is smaller than for general (non-smooth) open Riemann surfaces.

\section{Necessity}

Let $S^+ = SL_n(\C)$ and $S^- = N \cdot SL_n(\C) = \{ M \in GL_n(\C): \det M = -1 \}$, where $N = \mathrm{diag}(-1,1,1, \ldots, 1)$. 
An involutive matrix $A \in Inv_n$ is diagonalizable by conjugation and can have only $\pm 1$ as eigenvalues. 
Denote by $k$ the multiplicity of the eigenvalue $-1$ for $A$. Let $\Omega_k$ be the conjugacy class of $A$. This gives a partition of the involutions
\begin{align} \label{Inv-decompos}
   Inv_n = \dot{\bigcup}_{k=0}^n \Omega_k, 
\end{align}
where $\Omega_0 = \{ I \}$ and $\Omega_n = \{ -I \}$. 
\begin{proposition} \label{necessary}
    For each $k = 0, 1, \dots, n$, the inclusion $\Omega_k \hookrightarrow S^+$ is homotopic to the constant map $A \mapsto I$ for even $k$ and the inclusion $\Omega_k \hookrightarrow S^-$ is homotopic to $A \mapsto N$ for odd $k$, where $N = \mathrm{diag}(-1,1,1, \ldots, 1)$.
\end{proposition}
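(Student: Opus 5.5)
Every element of $\Omega_k$ can be written as $A = g D_k g^{-1}$ with $g \in GL_n(\C)$, where $D_k = \diag(-1,\dots,-1,1,\dots,1)$ has $k$ entries equal to $-1$. The plan is to move $D_k$ inside $SL_n(\C)$ (or $S^-$) to a fixed matrix along a path $\gamma_k$, and then conjugate the whole path by $g$. For this to give a homotopy, the result must not depend on the choice of $g$. So we will choose $\gamma_k$ to lie in the centralizer of $D_k$, namely $Z(D_k) = GL_k(\C)\times GL_{n-k}(\C)$ in block form. This centralizer is exactly the ambiguity in choosing $g$, since $gD_kg^{-1} = hD_kh^{-1}$ exactly when $g^{-1}h \in Z(D_k)$.

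For the construction, first take $k$ even. Group the $-1$ entries of $D_k$ into $k/2$ blocks $-I_2$. In each such $2\times 2$ block use the path
\[
R(t) = \begin{pmatrix} \cos \pi t & -\sin \pi t \\ \sin \pi t & \cos\pi t\end{pmatrix}, \qquad t \in [0,1],
\]
which runs in $SL_2(\C)$ from $I_2$ to $-I_2$. Since $R(t)$ is not scalar for intermediate $t$, it does not commute with all of $GL_k$. We therefore cannot conjugate it naively. Instead, use a path that is canonically attached to $A$.

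The cleanest choice is $\gamma(t) = \exp(i\pi t P)$, where $P = (I - A)/2$ is the projection onto the $(-1)$-eigenspace of $A$. Then $\gamma(t) = I + (e^{i\pi t}-1)P$, which is continuous in $A$ with no choice of $g$ involved. We have $\gamma(0) = I$ and $\gamma(1) = I - 2P = A$, and $\det \gamma(t) = e^{i\pi k t}$.

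The main obstacle is that $\det\gamma(t) = e^{i\pi k t}$ is not constant, so $\gamma$ leaves $S^\pm$. Correct this by multiplying by the scalar $e^{-i\pi k t/n}$:
\[
H(A,t) = e^{-i\pi k t/n}\bigl(I + (e^{i\pi t}-1)P\bigr).
\]
Now $\det H(A,t) = 1$ for all $t$, and $H$ is continuous on $\Omega_k\times[0,1]$. At the endpoints, $H(A,0) = I$ and $H(A,1) = e^{-i\pi k/n} A$. It remains to connect $e^{-i\pi k/n}A$ back to $A$, or more directly to reach the constant map.

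A simpler route avoids this. Set $\widetilde H(A,t) = e^{-i\pi k t/n}\bigl(I + (e^{i\pi t}-1)P\bigr)$ for $t\in[0,1]$. This is a homotopy inside $SL_n$ from the constant map $I$ to $A \mapsto \zeta A$ with $\zeta = e^{-i\pi k/n}$. Note that $\zeta^n = (-1)^k$.

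For $k$ even, $\zeta$ is an $n$-th root of unity, so $\zeta I \in SL_n(\C)$, which is path-connected. Choose a path $c(s)$ in $SL_n(\C)$ from $I$ to $\zeta^{-1} I$. Then $(A,s) \mapsto c(s)\,\zeta A$ is a homotopy inside $SL_n$ from $\zeta A$ to $A$, since $\det(\zeta A) = \zeta^n (-1)^k = 1$. Concatenating the two homotopies shows that the inclusion $\Omega_k \hookrightarrow S^+$ is homotopic to the constant map $I$.

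For $k$ odd, replace $P$ by $P$ relative to the basepoint $N$ and work in $S^-$. Let $\widetilde H$ be as above but multiplied on the left by $N$, that is, $(A,t)\mapsto N\,\widetilde H$ evaluated at $NA$? Rather than use that, proceed directly. The matrix $\zeta A$ with $\zeta^n = -1$ has $\det(\zeta A) = (-1)(-1) = 1$, and $\widetilde H$ has determinant $1$ throughout. So $\widetilde H$ is a homotopy in $SL_n$ from $I$ to $\zeta A$. Multiplying by $\zeta^{-1}$, the map $(A,t)\mapsto \zeta^{-1}\widetilde H(A,t)$ has determinant $\zeta^{-n} = -1$, so it lies in $S^-$. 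It is a homotopy from the constant $\zeta^{-1} I$ to $A$. Finally, $S^-$ is path-connected, being homeomorphic to $SL_n$ via multiplication by $N$. Joining $\zeta^{-1}I$ to $N$ by a path in $S^-$ shows that the inclusion $\Omega_k\hookrightarrow S^-$ is homotopic to the constant map $N$.
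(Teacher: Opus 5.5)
Your proof is correct and follows essentially the same approach as the paper: an explicit homotopy given by a scalar-normalized linear combination $\alpha(t)I+\beta(t)A$ with constant determinant joins $A$ to a scalar matrix inside $S^\pm$, and path-connectedness of $S^\pm$ finishes the argument (your single formula $e^{-i\pi kt/n}\bigl(I+(e^{i\pi t}-1)P\bigr)$ handles both parities at once, where the paper writes separate formulas $H^+$ and $H^-$). In a final write-up, delete the abandoned detours — the centralizer/rotation discussion and the sentence about ``$N\widetilde H$ evaluated at $NA$'' — since they play no role in the proof.
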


\begin{proof}
For each $k$, it suffices to find a continuous homotopy of continuous maps from $\Omega_k$ to $S^\pm$, which connects $A$ to an arbitrary constant matrix in $S^\pm$. Since $SL_n(\C)$ is path-connected, the inclusion map $\Omega_k \hookrightarrow S^\pm$ is then homotopic to the constant map $A \mapsto I$ for even $k$ and to $A \mapsto N$ for odd $k$. 

When $k$ is odd, $\det A=-1$. Consider the following homotopy 
\begin{align*}
    &H^-: \Omega_k \times [0,1] \to S^-, \\
    &  H^-(A,t) 
    =\frac{e^{it\pi \frac{n-2k}{2n}}}{2i} \Bigl[
    \bigl(e^{-\frac{i\pi}{2}(1-t)}+e^{\frac{i\pi}{2}(1-t)}\bigr) I
    -\bigl(e^{-\frac{i\pi}{2}(1-t)}-e^{\frac{i\pi}{2}(1-t)} \bigr) A
    \Bigr].
\end{align*}
The map is well-defined, since by conjugating $A$ to its diagonal form we get
\begin{align*}
    \det H^-(A, t) &= \left(\frac{e^{it\pi \frac{n-2k}{2n}}}{i} \right)^n \cdot 
    \left( e^{-\frac{i\pi}{2}(1-t)} \right)^k  \cdot
    \left( e^{\frac{i\pi}{2}(1-t)} \right)^{n-k} \\
    &= \frac{e^{i\pi \frac{n-2k}{2}}}{i^n} =  \frac{ \left( e^{ \frac{i\pi}{2}} \right)^{n-2k}}{i^n} = i^{-2k}=-1, \quad \forall t \in [0,1]. 
\end{align*}
Then, 
\begin{align*}
    H^-(A, 0) = A, \quad
    H^-(A, 1) = -i e^{i\pi \frac{n-2k}{2n}} I.
\end{align*}

When $k$ is even, $\det A = 1$. Consider the following homotopy 
\begin{align*}
    &H^+ : \Omega_k \times [0,1] \to S^+, \\
    &  H^+(A,t) = \frac{ e^{-it \pi \frac{2(n-k)-k}{n}}}{2} \Bigl[
    \bigl(e^{i\pi(1-t)}+e^{-2i\pi(1-t)}\bigr)I
    -\bigl(e^{i\pi(1-t)}-e^{-2i\pi(1-t)}\bigr)A
    \Bigr].
\end{align*}
Again by conjugating $A$ to its diagonal form we get 
\begin{align*}
    \det H^+(A, t) &= \left( e^{-it \pi \frac{2(n-k)-k}{n}} \right)^n \cdot 
    \left( e^{i\pi(1-t)} \right)^k  \cdot
    \left( e^{-2i\pi(1-t)} \right)^{n-k} \\
    &= e^{i \pi (2(n-k)-k)} =1, \quad \forall t \in [0,1], 
\end{align*}
where we use that $k$ is even in the last step. Moreover, 
\begin{align*}
    H^+(A, 0) = A, \quad H^+(A, 1) = e^{-i \pi \frac{2(n-k)-k}{n}} I. 
\end{align*}
This completes the proof. 
    \end{proof}

 \begin{Cor} Let a continuous map $f: T \to SL_n(\C)$ be a product of involutions, i.e.\ there are continuous maps $I_i: T \to Inv_n$, $i=1,2, \ldots , K$, with $$f (t) = I_1(t) \cdot   I_2 (t) \cdot \ldots \cdot  I_K(t).$$ Then $f$ is null-homotopic.
\end{Cor}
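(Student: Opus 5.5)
The plan is to deform each involution factor separately, using Proposition~\ref{necessary}, and then multiply the resulting homotopies.

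\emph{Step 1: localize the type.} Since $Inv_n=\dot\bigcup_k \Omega_k$ is a disjoint union of conjugacy classes, each $\Omega_k$ is open and closed in $Inv_n$. Indeed, $k$ is determined continuously by $k=(n-\mathrm{tr}\,B)/2$, and this is an integer-valued continuous function on $Inv_n$. Hence for each $i$ the sets $T_{i,k}=I_i^{-1}(\Omega_k)$ form a partition of $T$ into clopen subsets. On each $T_{i,k}$ we may compose $I_i$ with the homotopy $H^{+}$ (for $k$ even) or $H^{-}$ (for $k$ odd) from Proposition~\ref{necessary}. Because the pieces are clopen, these glue to a continuous homotopy $G_i\colon T\times[0,1]\to GL_n(\C)$. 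It satisfies $G_i(t,0)=I_i(t)$, and $G_i(t,1)$ is a locally constant matrix $c_i(t)\in\{I,N\}$ with $\det G_i(t,s)=\det I_i(t)$ for all $s$. To make the endpoint locally constant rather than merely scalar, we append the path in $S^\pm$ from the scalar endpoint to $I$ or $N$; this is the path-connectedness argument already used in the proof of Proposition~\ref{necessary}.

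\emph{Step 2: multiply.} Set $F(t,s)=G_1(t,s)\cdots G_K(t,s)$. Then $F(\cdot,0)=f$ and $\det F(t,s)=\prod_i\det I_i(t)=\det f(t)=1$, so $F$ is a homotopy inside $SL_n(\C)$. Its endpoint is $F(t,1)=c_1(t)\cdots c_K(t)$, which is locally constant, takes values in $\{I,N\}$-products, and has determinant $1$. Since $N^2=I$ and $N$ commutes with the other choices, each product of $I$'s and $N$'s with determinant $1$ is just $I$. Thus $F(\cdot,1)\equiv I$ and $f$ is null-homotopic.

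\emph{The main subtlety} is Step 1: one must ensure that the case distinction by $k$ does not break continuity. This is handled by the clopen decomposition, since $k$ is locally constant along $I_i$, together with the fact that the homotopies of Proposition~\ref{necessary} are given by a single formula on each $\Omega_k$, continuous in $(A,t)$. A cleaner alternative is to note that $I_i(t)\cdot c_i(t)$ is a continuous map into $SL_n(\C)$. The proposition shows it is homotopic to constant maps on each clopen piece. Then $f=\prod_i (I_i c_i)\,c_i$ with $c_i$ locally constant, and the result follows by the same multiplication argument.
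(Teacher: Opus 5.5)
Your proof is correct and is essentially the paper's argument: use that $k_i=(n-\mathrm{tr}\,I_i)/2$ is locally constant to split $T$ into clopen pieces, deform each factor via Proposition~\ref{necessary} to $I$ or $N$, and use $\det f=1$ to conclude that an even number of factors end at $N$, so the product ends at $I$. One small wording point: on $\Omega_k$ the endpoint of $H^\pm$ is already constant (a scalar depending only on $k$), so the appended path in $S^\pm$ only serves to normalize it to $I$ or $N$.
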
 
\begin{proof}
    For $i= 1, \dots, K$, the continuous function $k_i : T \to \N, \, t \mapsto (n- \mathrm{tr}\, I_i(t))/2$ is locally constant. The fibers of $(k_1, \dots, k_K)$ form a finite partition of $T$. On each fiber $T'$, there exists for each factor $I_i$ a $k_i$ such that $I_i(T') \subset \Omega_{k_i}$. Since $\det f = 1$, the number of $i$'s with odd $k_i$ must be even. Applying Proposition \ref{necessary} to each $I_i$  we obtain the null-homotopy. 
\end{proof}

\section{Sufficiency} 
\subsection{The continuous decomposition} \label{sufficient-cont}
Our method is based on the factorization into unitriangular matrices. Although there is an abstract result of Laffey for unitriangular matrices (see Theorem \ref{Laffey}) which works over any commutative unital ring, we provide an improved version which holds for rings of complex-valued functions and has a better estimate for the number of involutive factors.

\begin{Thm}[Vaserstein \cite{MR0947649}*{Theorem 4}] \label{vaserstein}
    For any natural number $n$ and an integer $d \ge 0$ there exists a natural
    number $L = L(n, d)$ such that for any normal topological space $T$ of finite covering dimension $d$, every null-homotopic continuous map $f : T \to SL_n(\C)$ can be written as a product of no more than $L$ unitriangular matrices with entries in $\mathcal{C}(T)$. 
\end{Thm}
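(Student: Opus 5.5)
The statement is Vaserstein's theorem, which the paper cites rather than proves, so what follows is how I would rebuild it. The plan has three steps: first reduce, inside $SL_n(\mathcal{C}(T))$, the size of the matrix that still needs treatment by row reduction, then treat the small case with the null-homotopy, and finally make every bound depend only on $(n,d)$.

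\emph{Step 1: reduction using the stable range.} Let $f\colon T\to SL_n(\C)$ be null-homotopic and look at its last row $v\colon T\to \C^n\setminus\{0\}$. Since $\C^n\setminus\{0\}$ is $(2n-2)$-connected, the stable rank of $\mathcal{C}(T)$ is at most roughly $\lfloor d/2\rfloor+1$. So when $n$ is larger than this, a general position argument on the $d$-dimensional space $T$ should give continuous $a_1,\dots,a_{n-1}$ with $(v_1+a_1v_n,\dots,v_{n-1}+a_{n-1}v_n)$ nowhere zero. Once a unimodular subrow exists, a bounded number of unitriangular column operations turns $v$ into $e_n$. A further bounded number then clears the last column, leaving $f$ reduced to an element of $SL_{n-1}(\mathcal{C}(T))$, and it is still null-homotopic because we only multiplied by unitriangular matrices. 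Iterating gives a recursion $L(n,d)\le L(n-1,d)+c(n)$ for $n$ large relative to $d$. So everything reduces to matrices of size $n_0\approx d/2+1$.

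\emph{Step 2: using the null-homotopy for small $n$.} Fix a homotopy $F\colon T\times[0,1]\to SL_n(\C)$ with $F_0=f$ and $F_1=I$. Near $I$, the Gauss decomposition $LDU$ is continuous. The diagonal factor $D$ splits into $\mathrm{diag}(a,a^{-1})$ blocks, and each block is continuously a product of four unitriangular matrices. Hence any map $T\to SL_n(\C)$ that stays in a fixed neighbourhood $W$ of $I$ is a product of a bounded number $c'(n)$ of unitriangulars. If $T$ were compact we could subdivide $[0,1]$ so that each step $F_{t_j}^{-1}F_{t_{j+1}}$ takes values in $W$. That gives a factorization, but with a number of factors depending on $T$ and $F$, not only on $(n,d)$.

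\emph{Step 3 (main obstacle): uniformity.} The difficulty is getting a bound that depends only on $(n,d)$, with no dependence on $T$ or on the homotopy $F$. I would first try to reduce to a universal situation.
\begin{enumerate}[(a)]
\item Take a fine open cover of $T$ of order $\le d+1$. Pass to its nerve $K$, of dimension $\le d$, and factor $f$ up to small error through $K$. The small error is absorbed by a bounded number of extra factors, as in Step 2.
\item Study the multiplication map $\Phi_L\colon (U^-\times U^+)^L\to SL_n(\C)$ of unitriangular groups. The aim is to show that, for $L=L(n,d)$ large, $\Phi_L$ admits lifts of null-homotopic maps from $d$-dimensional complexes.
\item Build the lift skeleton by skeleton, by induction on cells. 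The obstructions live in homotopy groups of the fibres of $\Phi_L$. For $L$ large these fibres should be highly connected, because more factors give more room, so the obstructions vanish in degrees up to $d$.
\end{enumerate}

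I expect Step 3 to be the hardest part. Proving enough connectivity of the fibres of $\Phi_L$ is essentially the content of Vaserstein's original argument. If this fails directly, my fallback is to combine Step 1 with a stabilization trick. Embed $SL_{n_0}$ into $SL_N$ with $N$ large relative to $d$, where the homotopy classes $[T,SL_N]$ are controlled by $K$-theory. Then derive a uniform bound there, and transport it back down using Step 1 in reverse.
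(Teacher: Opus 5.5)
The paper does not prove this theorem; it quotes it from Vaserstein. Your proposal therefore has to stand on its own, and it has a genuine gap exactly where you expect one. Step~3 is a programme, not an argument, and it establishes neither the bound depending only on $(n,d)$ nor even the factorization for non-compact $T$.

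\emph{The mechanism in (b)--(c) fails as stated.} You need an exact factorization $f=\Phi_L\circ h$. A lift up to homotopy is worthless, since $f\cdot(\Phi_L\circ h)^{-1}$ is then again merely null-homotopic. Lifting the null-homotopy exactly requires a homotopy lifting property, and $\Phi_L$ does not have one. Its fibres do not even have constant homotopy type, which is impossible for a Serre fibration over the path-connected base $SL_n(\C)$. For instance, take $n=2$, $L=2$ and the map $(z_1,z_2,z_3,z_4)\mapsto L(z_1)U(z_2)L(z_3)U(z_4)$ (notation of Proposition~\ref{prop:continuous-rank-two}). It is onto $SL_2(\C)$ and has $(1,1)$-entry $1+z_2z_3$. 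Over a matrix whose $(1,1)$-entry $\alpha$ is neither $0$ nor $1$, the fibre is $\{z_3\neq 0\}\cong\C^*$. Over $\alpha=0$ it is an affine line. In addition, the claim that the fibres become highly connected for large $L$ is only asserted.

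\emph{Step (a) does not produce a universal compact situation.} Even if the cover is finite, so that the nerve $K$ is a compact polyhedron, take $\pi\colon T\to K$ and $g\colon K\to SL_n(\C)$. Null-homotopy of $g\circ\pi$ does not imply null-homotopy of $g$, so Step~2 cannot be run on $K$.

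\emph{The fallback runs Step~1 backwards.} Step~1 writes elements of $SL_N$ as bounded unitriangular products times elements of $SL_{n_0}$. Knowing that an $n_0\times n_0$ matrix is a bounded product of $N\times N$ unitriangular matrices says nothing about factoring it inside $SL_{n_0}$. Bounded injective stability below the stable range is precisely the small-$n$ case you are trying to prove.

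\emph{Uniformity is cheap and needs no universal lifting.} Suppose no $L(n,d)$ existed, and pick null-homotopic $f_k\colon T_k\to SL_n(\C)$ that are not products of $k$ unitriangular matrices. The topological sum $\bigsqcup_k T_k$ is again normal of covering dimension $d$ (countable sum theorem), and $\bigsqcup_k f_k$ is null-homotopic. Any finite factorization of it bounds every $k$. So the uniform statement is equivalent to the plain statement that every null-homotopic map on an \emph{arbitrary}, possibly non-compact, finite-dimensional normal space is \emph{some} finite product of unitriangular matrices. That is the real content of the theorem, and it is exactly where your Step~2 stops. On non-compact $T$ the number of subdivision points needed for the homotopy can be unbounded, and nothing in the proposal replaces this.

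\emph{A smaller point in Step~1.} Multiplying by unitriangular matrices only shows that the reduced block is null-homotopic in $SL_n(\C)$, but you need it null-homotopic in $SL_{n-1}(\C)$. This does hold: $SL_{n-1}(\C)\hookrightarrow SL_n(\C)$ is $(2n-2)$-connected, so $[T,SL_{n-1}(\C)]\to[T,SL_n(\C)]$ is injective when $d\le 2n-3$. That inequality is guaranteed in the range $n\ge\lfloor d/2\rfloor+2$ where the stable-range reduction applies. This argument is for CW complexes; for general normal $T$ it needs an additional approximation argument. Either way it has to be argued.
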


To our knowledge, very few bounds are known for these numbers $L = L(n, d)$; for instance, $L(n,1)=4$ follows from \cite{MR2822515}
and the fact that $\mathcal{C}(T)$ has Bass stable rank equal to $1$. Moreover, $L(n, d)\ge L(n+1, d)$ for all $n$ and $d$ \cite{MR0961333}. In a forthcoming paper we will provide bounds, linear in $d$, for all these numbers.

Let $R$ be a unital ring. Let $\mathcal{I}_n(R)$ denote the matrices in the elementary subgroup $E_n (R)$ that are finite products of involutions. Let $I(n,R)$ denote the minimal number such that every matrix in $\mathcal{I}_n(R)$ factorizes as a product of $I(n,R)$ involutions, and let $t(n,R)$ denote the minimal number such that every matrix in $E_n (R)$ is a product of $t(n,R)$ unitriangular factors.

Assume for a moment that every unitriangular matrix in $SL_n(R)$ is a product of $N$ involutions.
Since conjugates of involutions are involutions, a product of conjugates of $K$ unitriangular matrices will be a product of $NK$ involutions. This estimate can, however, be improved by the following reasoning.

Suppose we have a product of $3$ unitriangular matrices $U_1 U_2 U_3$ of alternating forms, e.g.\ $U_1, U_3$ are upper unitriangular and $U_2$ is lower unitriangular. Then 
\begin{align*}
    U_1 U_2 U_3 = (U_1 U_2 U_1^{-1}) (U_1 U_3) 
\end{align*}
is a product of $2$ conjugates of unitriangular matrices, i.e.\@ $2N$ involutions. This is because $U_2$ conjugated by $U_1$ is a product of $N$ involutions, while the product $U_1 U_3$ of upper unitriangular matrices is again upper unitriangular, hence again a product of $N$ involutions. Therefore, from $3$ unitriangular factors we get $2N$ involutions. An additional factor $U_4$ increases our estimate by one, but adding a further, fifth, factor does not increase the number of involutions any more:
\begin{align*}
    U_1 U_2 U_3 U_4 U_5 = (U_1 U_2 U_1^{-1}) (U_1 U_3 U_4 U_3^{-1} U_1^{-1}) ( U_1 U_3 U_5 ). 
\end{align*}
The last factor has the same form as $U_5$. This leads us to the following statement. 
\begin{proposition} \label{proposition:e-t-estimate} 
    If every unitriangular matrix is a product of $N$ involutions, then
    \begin{align*} 
    I(n, R)  \leq N \left(\left\lfloor \frac{1}{2} t(n,R) \right\rfloor + 1\right).
    \end{align*}
\end{proposition}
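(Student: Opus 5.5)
Let $t = t(n,R)$ and write an arbitrary $A \in \mathcal{I}_n(R)$ (in fact any $A\in E_n(R)$) as a product $A = U_1 U_2 \cdots U_t$ of unitriangular matrices. Padding with identity factors if needed, we may assume the factors alternate between upper and lower unitriangular: adjacent factors of the same type can be merged, which only shortens the word, and identity factors can be inserted.

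The plan is to generalize the two displayed identities by an explicit telescoping formula. Set $P_0 = I$. For $j \ge 1$ define the partial products of the odd-indexed factors,
\begin{align*}
P_j = U_1 U_3 \cdots U_{2j-1}.
\end{align*}
Since all odd-indexed factors have the same triangular type, each $P_j$ is itself unitriangular of that type.

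We then check by induction on $m$ the following two identities. For the even case,
\begin{align*}
U_1 U_2 \cdots U_{2m} = \prod_{j=1}^{m} \bigl(P_j U_{2j} P_j^{-1}\bigr) \cdot P_m .
\end{align*}
For the odd case,
\begin{align*}
U_1 \cdots U_{2m+1} = \prod_{j=1}^{m} \bigl(P_j U_{2j} P_j^{-1}\bigr) \cdot P_{m+1}.
\end{align*}
For the induction step, multiplying the even identity on the right by $U_{2m+1}$ gives $P_m U_{2m+1} = P_{m+1}$. Multiplying the odd identity on the right by $U_{2m+2}$ gives
\begin{align*}
P_{m+1}U_{2m+2} = \bigl(P_{m+1}U_{2m+2}P_{m+1}^{-1}\bigr) P_{m+1}.
\end{align*}
This recovers the displayed cases for $t=3$ and $t=5$.

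It remains to count involutions. Each factor $P_j U_{2j} P_j^{-1}$ is a conjugate of a unitriangular matrix, so it is a product of $N$ involutions, since conjugates of involutions are involutions. The trailing factor $P_m$ or $P_{m+1}$ is unitriangular, hence also a product of $N$ involutions. This gives $m+1$ blocks in both cases: when $t = 2m$ there are $m+1 = \lfloor t/2\rfloor + 1$ blocks, and when $t = 2m+1$ there are again $m+1 = \lfloor t/2 \rfloor + 1$ blocks. Hence $A$ is a product of at most $N(\lfloor t/2\rfloor + 1)$ involutions. Since identity factors are allowed as involutions, this means exactly that many, and the bound on $I(n,R)$ follows.

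The only step needing care is the alternation and normalization bookkeeping, namely that the reduced or padded word still has length at most $t$. The identity itself is a routine induction and should not be an obstacle.
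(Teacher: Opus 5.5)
Your proof is correct and takes the paper's approach: the paper only displays the identities for three and five alternating factors and asserts that the pattern continues. Your telescoping formula with $P_j = U_1U_3\cdots U_{2j-1}$ is the general form of those identities (for $m=1,2$ it reproduces them exactly), and merging adjacent factors of the same type supplies the alternation assumption that the paper leaves implicit.
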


The following result of Laffey would settle our problem; we recall it for the reader who might be interested in more general rings.
\begin{Thm} [\cite{MR1635020}] \label{Laffey}
 Let $R$ be a unital ring. Then every unitriangular matrix in $SL_n (R)$ is a product of $10$ involutions.
\end{Thm}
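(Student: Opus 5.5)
The plan is to write an arbitrary upper unitriangular $U\in SL_n(R)$ as a product of a bounded number of matrices, each of which is visibly a product of two involutions. The count of $10$ then comes from exhibiting $5$ such factors. (The lower unitriangular case follows by conjugating with the antidiagonal permutation matrix, which is itself an involution and preserves the class of products of involutions.) I will use two elementary building blocks; the second is the more important one.

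\emph{Block transvections.} Let $S\subset\{1,\dots,n\}$ and let $D_S$ be the diagonal matrix with $-1$ at the positions in $S$ and $1$ elsewhere. If $X$ has nonzero entries only in rows indexed by $S$ and columns indexed by $S^c$, then $X^2=0$ and $D_S(I+X)D_S=I-X=(I+X)^{-1}$. Hence $D_S(I+X)$ is an involution, and
\[
I+X = D_S\cdot\bigl(D_S(I+X)\bigr)
\]
is a product of two involutions. This holds over any unital ring, commutative or not.

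\emph{Pairs of inverse blocks.} For blocks $Y$ of equal size,
\[
\begin{pmatrix} Y&0\\0&Y^{-1}\end{pmatrix}
=\begin{pmatrix} 0&Y\\Y^{-1}&0\end{pmatrix}\begin{pmatrix} 0&I\\I&0\end{pmatrix},
\]
and both factors are involutions. More generally, any block-diagonal matrix whose blocks come in pairs $(Y_j,Y_j^{-1})$ is a product of two involutions, using a single block permutation involution.

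The order of steps is as follows. First, I split the index set into two halves, $\{1,\dots,m\}$ and $\{m+1,\dots,n\}$, and write
\[
U=\begin{pmatrix}A&B\\0&C\end{pmatrix}=\begin{pmatrix}A&0\\0&C\end{pmatrix}\begin{pmatrix}I&A^{-1}B\\0&I\end{pmatrix}.
\]
The second factor is a block transvection, so it costs $2$ involutions. Second, I must treat the Levi part $\diag(A,C)$ with a bounded number of factors rather than by recursion, since recursion would give a count growing like $\log n$.

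For the Levi part I would use a telescoping trick. Write a block-diagonal matrix $\diag(A_1,\dots,A_m)$ as a product $PQ$, where
\[
P=\diag(B_1,B_1^{-1},B_3,B_3^{-1},\dots),\qquad Q=\diag(I,C_2,C_2^{-1},C_4,C_4^{-1},\dots)
\]
are paired in the two staggered ways. The blocks are solved successively: $B_1=A_1$, then $C_2=A_1A_2$, then $B_3=A_3C_2$, and so on. Both $P$ and $Q$ are products of two involutions by the second building block.

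The main obstacle is the last block, where the telescoping does not close. It does close when the leftover product is the identity, for example when all the $A_i$ are $1\times 1$ unitriangular blocks, that is, scalars equal to $1$. So the real work is to arrange the reduction so that the Levi part at the end is trivial or can be absorbed.

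To handle this, I would peel off $U$ directly as a product of a few block transvections adapted to different partitions. The first candidates are the partitions into odd and even indices and into intervals, chosen so that their product reproduces $U$ with only diagonal blocks equal to $I$. The target is $5$ such factors, giving $2\cdot 5=10$ involutions. Finding a partition scheme with a bounded number of factors, independent of $n$, is the step I expect to require a genuinely clever choice. If the pure transvection approach fails, I would fall back on mixing transvections with the paired-block factors described above.
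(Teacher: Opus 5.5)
Your two building blocks are correct, and so is the reduction to the upper triangular case. However, the proposal stops exactly where the content of the theorem begins. You never produce a factorization of an arbitrary unitriangular $U$ into a number of such blocks that is bounded independently of $n$.

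The telescoping for the Levi part does not close. The leftover block is a product of all the $A_i$ in some order, for instance $A_3A_1A_2A_4$ when there are four blocks. That is again an arbitrary unitriangular matrix of the block size, so you are back to the original problem in a smaller size. Your last paragraph (``the target is $5$ such factors'', ``requires a genuinely clever choice'') states the goal; it is not an argument.

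What your ingredients actually prove is a dyadic bound. Split into halves, then quarters, and so on. The block transvections produced at one level live on disjoint diagonal blocks, so they merge into a single block transvection: take $S$ to be the union of the first halves of the current blocks. Hence $U$ is a product of $\lceil \log_2 n\rceil$ block transvections, i.e.\ of $2\lceil \log_2 n\rceil$ involutions. This bound grows with $n$ and gives $10$ only for $n\le 32$.

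Your fallback of ``a few block transvections adapted to different partitions'' is also not routine. Suppose $k$ upper triangular block transvections for sets $S_1,\dots,S_k$ are to reach every entry $(i,j)$ directly, i.e.\ some $l$ has $i\in S_l$ and $j\notin S_l$. Then the membership vectors of $1,\dots,n$ in $S_1,\dots,S_k$ must be pairwise distinct, which forces $n\le 2^k$. For $n>2^k$, some entries of the product are reached only through sums of products of other entries. Making such a system solvable over an arbitrary unital ring is precisely the missing idea, and nothing in the proposal supplies it.

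For comparison, the paper does not prove this statement; it only quotes Laffey's theorem. What it proves is the sharper bound of four involutions when $R$ contains an infinite central field $F$. There the obstruction you ran into is bypassed by leaving the unipotent group. One multiplies by $B=\diag(\alpha_1,\alpha_1^{-1},\dots,\alpha_t,\alpha_t^{-1})$ (with an extra $1$ if $n$ is odd), which is a product of two involutions by your second building block. Lemma~\ref{BiSo} then shows that $B^{-1}A$, being triangular with pairwise distinct central diagonal entries, is conjugate to $B^{-1}$, hence also a product of two involutions. That trick needs central units $\alpha_i$ with invertible pairwise differences, which rings such as $\mathbb{F}_2$ or $\mathbb{Z}$ do not have. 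So for the statement as given, with $R$ an arbitrary unital ring, you still have to supply a genuinely bounded factorization such as Laffey's, and the proposal does not.
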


However, we can improve the number of factors, starting with the following lemma.  The proof is similar to that of \cite[Lemma 2.1]{MR4815128}, where matrices over division rings are considered. For the convenience of the reader, we include brief proofs here.

\begin{lemma}\label{BiSo}
	Let $R$ be a unital ring containing an infinite field $F$ which is central in $R$, and let $n>1$ be an integer.
	Suppose that $A\in M_n(R)$ is an upper triangular matrix whose diagonal entries
	$a_1,\ldots,a_n$ lie in $F$ and are pairwise distinct.
	Then there exists a matrix $P\in GL_n(R)$ such that
	\[
	P^{-1}AP=\mathrm{diag}(a_1,\ldots,a_n).
	\]
\end{lemma}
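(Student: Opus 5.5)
Since the diagonal entries $a_i$ lie in the central field $F$ and are pairwise distinct, every difference $a_i-a_j$ with $i\neq j$ is a nonzero element of $F$. It is therefore invertible in $F\subset R$ and commutes with everything in $R$. I will proceed by induction on $n$ and look for $P$ upper unitriangular, so that $P\in GL_n(R)$ automatically; its inverse is again upper unitriangular over any ring. Note that only the invertibility of the differences is used, not the infinitude of $F$.

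\emph{Case $n=2$.} Take
\[
A=\begin{pmatrix} a_1 & b\\ 0 & a_2\end{pmatrix},\qquad P=\begin{pmatrix}1 & x\\ 0&1\end{pmatrix}.
\]
A direct computation gives $P^{-1}AP$ with upper-right entry $a_1x+b-xa_2$. Since $a_1,a_2$ are central, this equals $(a_1-a_2)x+b$, and it vanishes for $x=-(a_1-a_2)^{-1}b$.

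\emph{Inductive step.} Write $A$ in block form
\[
A=\begin{pmatrix} A' & v\\ 0 & a_n\end{pmatrix},
\]
where $A'$ is $(n-1)\times(n-1)$ upper triangular with diagonal $a_1,\dots,a_{n-1}$ and $v\in R^{n-1}$ is a column. By the induction hypothesis there is an upper unitriangular $P'$ with $P'^{-1}A'P'=D'=\diag(a_1,\dots,a_{n-1})$. Conjugating $A$ by $\diag(P',1)$ yields
\[
\begin{pmatrix} D' & w\\ 0 & a_n\end{pmatrix},\qquad w=P'^{-1}v.
\]
Next conjugate by
\[
Q=\begin{pmatrix} I & x\\ 0&1\end{pmatrix},\qquad Q^{-1}=\begin{pmatrix} I & -x\\ 0&1\end{pmatrix}.
\]
The new last column has upper part $D'x+w-xa_n$. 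Its $i$-th entry is $(a_i-a_n)x_i+w_i$, using centrality of $a_i$ and $a_n$. Choosing $x_i=-(a_i-a_n)^{-1}w_i$ kills the column. Then $P=\diag(P',1)\,Q$ is the required matrix.

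The main point requiring care is noncommutativity. The entries of $A$ above the diagonal, and the solutions $x_i$, lie in the possibly noncommutative ring $R$, so every manipulation must use only that the $a_i$ and their differences lie in the central field $F$. In particular, $a_ix_i-x_ia_n=(a_i-a_n)x_i$ holds only because $a_i$ and $a_n$ are central, and it is exactly this that makes the linear equations solvable.
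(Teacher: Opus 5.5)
Your proof is correct and follows essentially the same route as the paper: induction on $n$, conjugating first by the block matrix coming from the induction hypothesis and then by a block unitriangular matrix whose off-diagonal block is solved for using only that the differences $a_i-a_j$ are central and invertible. The only difference is cosmetic: you split off the last column and apply induction to the top-left block, whereas the paper splits off the first row and applies induction to the bottom-right block. Your remark that the infinitude of $F$ is not needed applies equally to the paper's argument.
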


\begin{proof}
	We argue by induction on $n$.
	
	First, consider the case $n=2$. Let
	\[
	A=\begin{pmatrix}
		a_1 & y\\
		0 & a_2
	\end{pmatrix},
	\]
	where $y\in R$.
	Since $a_1,a_2\in F$ and $a_1\neq a_2$, the element $a_1-a_2$ is invertible in $F$.
	Set $x=y(a_1-a_2)^{-1}$.
	A direct computation shows that
	\[
	\begin{pmatrix}
		1 & x\\
		0 & 1
	\end{pmatrix}
	A
	\begin{pmatrix}
		1 & x\\
		0 & 1
	\end{pmatrix}^{-1}
	=
	\begin{pmatrix}
		a_1 & 0\\
		0 & a_2
	\end{pmatrix}.
	\]
	
	Now assume that $n>2$ and that the statement holds for all matrices of size less than $n$.
	Write
	\[
	A=\begin{pmatrix}
		a_1 & \alpha\\
		0 & A'
	\end{pmatrix},
	\]
	where $A'\in M_{n-1}(R)$ is an upper triangular matrix whose diagonal entries
	are $a_2,\ldots,a_n\in F$, and $\alpha$ is a row vector of length $n-1$.
	
	Let $A''=\mathrm{diag}(a_2,\ldots,a_n)$.
	By the induction hypothesis, there exists $P\in GL_{n-1}(R)$ such that
	\[
	P^{-1}A'P=A''.
	\]
	Since the elements $a_1-a_i$ are nonzero and belong to $F$ for all $i\ge2$,
	the diagonal matrix $a_1 I_{n-1}-A''$ is invertible in $M_{n-1}(R)$.
	
	Set
	\[
	x'=-\alpha P (a_1 I_{n-1}-A'')^{-1}
	\]
	and define
	\[
	P'=
	\begin{pmatrix}
		1 & 0\\
		0 & P
	\end{pmatrix}
	\begin{pmatrix}
		1 & x'\\
		0 & I_{n-1}
	\end{pmatrix}.
	\]
	A straightforward computation shows that
	\[
	P'^{-1}AP'=
	\begin{pmatrix}
		a_1 & 0\\
		0 & A''
	\end{pmatrix}.
	\]
	This completes the proof.
\end{proof}

We are now in a position to establish the following result.

\begin{proposition}
	Let $R$ be a unital ring containing an infinite field $F$ which is central in $R$, and let $n>1$ be an integer.
	Then every unitriangular matrix in $M_n(R)$ can be expressed as a product of four involutions.
\end{proposition}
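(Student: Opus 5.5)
The idea is to write a unitriangular matrix $U$ as a product of two matrices, each of which is diagonalizable (via Lemma~\ref{BiSo}) to a diagonal matrix that is itself a product of two involutions. Without loss of generality $U$ is upper unitriangular; the lower case follows by conjugating with the permutation matrix reversing the order of the basis, which preserves involutions.

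First I choose pairwise distinct nonzero elements $a_1,\dots,a_n\in F$, which is possible since $F$ is infinite. Put $D=\mathrm{diag}(a_1,\dots,a_n)$ and write
\[
U = D\cdot (D^{-1}U).
\]
Here $D^{-1}U$ is upper triangular with diagonal entries $a_1^{-1},\dots,a_n^{-1}$, which are again pairwise distinct and lie in $F$. By Lemma~\ref{BiSo} (applied trivially to $D$ and nontrivially to $D^{-1}U$), $D^{-1}U$ is conjugate in $GL_n(R)$ to $\mathrm{diag}(a_1^{-1},\dots,a_n^{-1})$. Since conjugates of involutions are involutions, it suffices to show that each of $D$ and $D^{-1}$ is a product of two involutions in $M_n(F)$.

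A diagonal matrix is conjugate to its inverse, so I would like to choose the $a_i$ so that the multiset $\{a_i\}$ is closed under $a\mapsto a^{-1}$. Then $D$ is conjugate to $D^{-1}$ by a permutation matrix $\sigma$ that is itself an involution (pairing $a_i$ with $a_i^{-1}$). Consequently $\sigma D\sigma = D^{-1}$, i.e.\ $(\sigma D)^2=I$, and $D=\sigma\cdot(\sigma D)$ is a product of two involutions. The same argument applies to $D^{-1}$.

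The main obstacle is choosing distinct $a_i$ with this inverse-closed property. Inverse-closed sets of distinct elements can contain at most the self-inverse elements $\pm1$; these are one element if $\mathrm{char}\,F=2$ and two otherwise. So I take $a_i$ in pairs $\{b,b^{-1}\}$ with $b\neq\pm1$; $F$ being infinite allows arbitrarily many disjoint such pairs. For odd $n$ I add one further element, $1$. Over the complex-function rings of interest there is also room to add $-1$ if needed.

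Finally, the diagonal entries of $D^{-1}U$ are $\{a_i^{-1}\}$, which is again the same inverse-closed set, and the permutation involution $\sigma$ swapping paired positions conjugates the diagonal to its inverse. Both factors are therefore products of two involutions, giving four in total.
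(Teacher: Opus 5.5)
Your proposal is correct and is essentially the paper's proof. The paper likewise writes $A=B(B^{-1}A)$ with $B=\mathrm{diag}(\alpha_1,\alpha_1^{-1},\dots,\alpha_t,\alpha_t^{-1})$ (with a final entry $1$ when $n$ is odd), factors $B$ as a product of two block-diagonal involutions (your $\sigma$ and $\sigma D$), and uses Lemma~\ref{BiSo} to see that $B^{-1}A$ is similar to $B^{-1}$, giving four involutions in total.
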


\begin{proof}
	Let $A$ be a unitriangular matrix in $M_n(R)$.
	Set
	\[
	t=\begin{cases}
		\dfrac{n}{2}, & \text{if $n$ is even},\\[0.3em]
		\dfrac{n-1}{2}, & \text{if $n$ is odd}.
	\end{cases}
	\]
	
	Since $F$ is infinite, we may choose elements
	$\alpha_1,\alpha_2,\ldots,\alpha_t \in F \setminus \{ 0, 1 \}$ such that 
    \[  \alpha_1,\alpha_1^{-1},\alpha_2,\alpha_2^{-1},\ldots,\alpha_t,\alpha_t^{-1} \text{ are pairwise distinct}.
    \]
	Write $$B=\begin{cases} \mathrm{diag}(\alpha_1,\alpha_1^{-1},\alpha_2,\alpha_2^{-1},\ldots,\alpha_t,\alpha_t^{-1}) & \text{ if $n$ is even};\\ \mathrm{diag}(\alpha_1,\alpha_1^{-1},\alpha_2,\alpha_2^{-1},\ldots,\alpha_t,\alpha_t^{-1},1) & \text{ if $n$ is odd}. \end{cases}$$
	
	A straightforward computation shows that, in both cases, the matrix $B$ can be written as the
	product of two involutions, each of which is block diagonal with $2\times2$ blocks of the form
	\[
	\begin{pmatrix}
		0 & \alpha_i\\
		\alpha_i^{-1} & 0
	\end{pmatrix}
	\quad \text{and} \quad
	\begin{pmatrix}
		0 & 1\\
		1 & 0
	\end{pmatrix},
	\]
	together with an additional $1\times1$ block equal to $1$ when $n$ is odd.
	Hence, $B$ itself is a product of two involutions. In particular, $B=CD$ where  $$C=\begin{cases} \mathrm{diag}\left(\begin{pmatrix}
			0 & \alpha_1\\
			\alpha_1^{-1} & 0
		\end{pmatrix},\begin{pmatrix}
		0 & \alpha_2\\
		\alpha_2^{-1} & 0
		\end{pmatrix},\ldots,\begin{pmatrix}
		0 & \alpha_t\\
		\alpha_t^{-1} & 0
		\end{pmatrix}\right) & \text{ if $n$ is even};\\ \mathrm{diag}\left(\begin{pmatrix}
			0 & \alpha_1\\
			\alpha_1^{-1} & 0
		\end{pmatrix},\begin{pmatrix}
			0 & \alpha_2\\
			\alpha_2^{-1} & 0
		\end{pmatrix},\ldots,\begin{pmatrix}
			0 & \alpha_t\\
			\alpha_t^{-1} & 0
		\end{pmatrix},1\right) & \text{ if $n$ is odd}, \end{cases}$$ and $$D=\begin{cases} \mathrm{diag}\left(\begin{pmatrix}
		0 & 1\\
		1 & 0
		\end{pmatrix},\begin{pmatrix}
		0 & 1\\
		1 & 0
		\end{pmatrix},\ldots,\begin{pmatrix}
		0 & 1\\
		1 & 0
		\end{pmatrix}\right) & \text{ if $n$ is even};\\ \mathrm{diag}\left(\begin{pmatrix}
		0 & 1\\
		1 & 0
		\end{pmatrix},\begin{pmatrix}
		0 & 1\\
		1 & 0
		\end{pmatrix},\ldots,\begin{pmatrix}
		0 & 1\\
		1 & 0
		\end{pmatrix},1\right) & \text{ if $n$ is odd}. \end{cases}$$
	
	Now observe that $A = B(B^{-1}A)$.
	By Lemma~\ref{BiSo} or its lower triangular version, the matrix $B^{-1}A$ is similar to $B^{-1}=DC$.
	Since similarity preserves the property of being a product of two involutions,
	it follows that $B^{-1}A$ is also a product of two involutions.
	Consequently, $A$ can be written as a product of four involutions, which completes the proof.
\end{proof}

Thus Proposition \ref{proposition:e-t-estimate} implies the following result.

\begin{Cor} \label{estimate}
 Let $L= L(n,d)$   be the number from  Theorem \ref{vaserstein}. Then 
 for any normal topological space $T$ of finite covering dimension $d$, every null-homotopic continuous map $f : T \to SL_n(\C)$ can be written as a product of no more than $4  (\left\lfloor L/2  \right\rfloor+ 1)$ 
 involutions  with entries in $\mathcal{C}(T)$. 
\end{Cor}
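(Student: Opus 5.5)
The corollary follows by feeding Theorem \ref{vaserstein} into Proposition \ref{proposition:e-t-estimate}, with $N=4$ supplied by the preceding proposition. Take $R=\mathcal{C}(T)$ and let $F\subset R$ be the constant functions, a copy of $\C$. This $F$ is an infinite field, it is central because $R$ is commutative, and it contains the unit. So the preceding proposition applies and every unitriangular matrix in $M_n(\mathcal{C}(T))$ is a product of four involutions with entries in $\mathcal{C}(T)$. Concretely, one uses the constant diagonal matrix $B$ built from constants $\alpha_i\in\C\setminus\{0,\pm1\}$, and the conjugating matrix $P$ of Lemma \ref{BiSo} has continuous entries. Thus we may take $N=4$.

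Next, let $f:T\to SL_n(\C)$ be null-homotopic. Theorem \ref{vaserstein} writes $f=U_1\cdots U_K$ with $K\le L=L(n,d)$, each $U_j$ unitriangular over $\mathcal{C}(T)$. Proposition \ref{proposition:e-t-estimate} is phrased via $t(n,R)$, which is a supremum over all of $E_n(R)$. Since we only need a bound for this particular $f$, I would instead rerun its grouping argument directly on the product $U_1\cdots U_K$.

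The grouping works as follows. First merge adjacent factors of the same type, which only decreases $K$, so the $U_j$ alternate between upper and lower. Then set
\begin{align*}
f = (U_1U_2U_1^{-1})\,(U_1U_3U_4U_3^{-1}U_1^{-1})\cdots,
\end{align*}
so that each bracket is a conjugate of a single unitriangular factor. Every pair $U_{2m}U_{2m+1}$ is absorbed into one such conjugate, with the odd factors accumulating into the conjugating prefix. The result is $f$ written as a product of at most $\lfloor K/2\rfloor+1$ conjugates of unitriangular matrices, and the final bracket is itself unitriangular. Conjugates of involutions are involutions, so $f$ is a product of at most
\begin{align*}
4\left(\left\lfloor K/2\right\rfloor+1\right)\le 4\left(\left\lfloor L/2\right\rfloor+1\right)
\end{align*}
involutions. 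If at most $K$ involutions are used, we can pad with copies of $I$ to obtain exactly the bound.

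The only delicate point is the bookkeeping in the telescoping step. One has to check, by induction on $K$, that it yields exactly $\lfloor K/2\rfloor+1$ blocks for both parities of $K$. For odd $K=2m+1$ the last block is $U_1U_3\cdots U_{2m+1}$, which is unitriangular. For even $K=2m$ the last block is a conjugate of $U_{2m}$, and the count $m\le\lfloor K/2\rfloor+1$ still holds. Everything else is a direct substitution of the earlier results.
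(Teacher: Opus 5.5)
Your argument is the paper's: take $N=4$ from the four-involution proposition (with $F=\C$ the constant functions) and feed Vaserstein's bound $K\le L$ into the grouping argument behind Proposition~\ref{proposition:e-t-estimate}. One bookkeeping slip, harmless for the bound: for even $K=2m$ the $m$ conjugates $P_jU_{2j}P_j^{-1}$ with $P_j=U_1U_3\cdots U_{2j-1}$ multiply to $fP_m^{-1}$, so a trailing unitriangular block $P_m$ remains and the count is $m+1=\lfloor K/2\rfloor+1$ rather than $m$.
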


\begin{definition}
Let $I(n,d)$ denote the smallest number such that, for any normal topological space $T$ of finite covering dimension $d$, every null-homotopic matrix in $SL_n (\mathcal{C}(T))$ is a product of $I(n,d)$ continuous involutions.
\end{definition}

By a result of Dennis and Vaserstein, $\displaystyle\lim_{n \to \infty} L(n,d) \le 6$ \cite[Theorem 20]{MR0961333} (in fact this holds for any ring of finite Bass stable rank).  This gives 
$$\lim_{n \to \infty} I(n,d) \le 16.$$

On the other hand, there is a lower bound for $I(n,d)$ that grows linearly with $d$.

\begin{proposition}\label{prop:exp}
Let $R$ be $\mathcal{C}(T)$ (or $\mathcal O(X)$ in the holomorphic case) and $B\in GL_n(R)$ with $B^2=\Id$. Then $P:=\tfrac12(\Id-B)$ is an idempotent
in $M_n(R)$ and
\[
B=\exp(i\pi P).
\]
Consequently, if $f\in SL_n(R)$ is a product of $K$ involutions, then $f$ is a product of $K$ exponentials
$\exp(a_1)\cdots\exp(a_K)$ with $a_j\in M_n(R), j = 1, \dots,K$.
\end{proposition}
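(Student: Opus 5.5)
The plan is a direct algebraic verification, carried out entirely in the commutative ring $M_n(R)$-setting without any analysis beyond convergence of the exponential series. Since $R$ is either $\mathcal{C}(T)$ or $\mathcal O(X)$, the exponential $\exp(a)=\sum_{m\ge0} a^m/m!$ of any $a\in M_n(R)$ is defined pointwise. It converges locally uniformly, so it is again continuous (respectively holomorphic). Hence $\exp(a)\in GL_n(R)$, with inverse $\exp(-a)$, and all identities can be checked pointwise or in the formal power series sense.

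First I will show that $P$ is idempotent. Using $B^2=\Id$, compute
\[
P^2=\tfrac14(\Id-2B+B^2)=\tfrac14(2\Id-2B)=\tfrac12(\Id-B)=P .
\]
Next I expand the exponential. For an idempotent $P$ we have $P^m=P$ for all $m\ge1$, so
\[
\exp(i\pi P)=\Id+\sum_{m\ge1}\frac{(i\pi)^m}{m!}P=\Id+(e^{i\pi}-1)P=\Id-2P .
\]
Finally, substituting $P=\tfrac12(\Id-B)$ gives $\Id-2P=\Id-(\Id-B)=B$, which is the claimed identity $B=\exp(i\pi P)$.

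For the consequence, suppose $f=B_1\cdots B_K$ with each $B_j$ an involution in $GL_n(R)$. Set $a_j=i\pi P_j=\tfrac{i\pi}{2}(\Id-B_j)\in M_n(R)$. By the first part, $B_j=\exp(a_j)$, and so $f=\exp(a_1)\cdots\exp(a_K)$. The condition $f\in SL_n(R)$ is not needed for this step.

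I expect no real obstacle here. The only point needing a remark is that the entries of $P_j$ lie in $R$ (they do, since $\tfrac12\in\C\subset R$) and that the exponential series stays within the ring of continuous or holomorphic functions. In the holomorphic case the latter holds because locally uniform limits of holomorphic functions are holomorphic; alternatively, it can be sidestepped entirely, since the series collapses to the finite expression $\Id-2P$.
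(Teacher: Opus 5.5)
Your proof is correct and is essentially the paper's argument: $P^2=\tfrac14(\Id-2B+B^2)=P$, then the exponential series collapses to $\Id+(e^{i\pi}-1)P=\Id-2P=B$, and this is applied to each factor. One harmless slip: $M_n(R)$ is not a commutative ring, but your computation only uses that $\Id$ commutes with $B$.
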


\begin{proof}
$P^2=\tfrac14(\Id-2B+B^2)=\tfrac12(\Id-B)=P$, and $\exp(i\pi P)=\Id+(e^{i\pi}-1)P=\Id-2P=B$.
\end{proof}

Combining this with the lower bound for factorizations into exponential factors from \cite{MR1305876}*{Theorem 1.10}, we conclude from Proposition \ref{prop:exp} the lower bound (the second term below) for the decomposition into involutions:
\begin{equation}\label{lower-bound}
    I(n,d)\ge \max \left\{ 4,   \ \left\lfloor \frac{d - 2}{2 (n^2 -1)} \right\rfloor + 1 \right\}.\end{equation}

In particular $$\lim_{d \to \infty} I(n,d) = \infty.$$

The first term in the bracket of the above inequality, namely 4, still requires a proof.  

When $n=2, d\ge 1$, the first part of the proof of Proposition~\ref{prop:continuous-rank-two}
gives the lower bound 4. For $n=2$ and $d=0$, consider  the union $\{0\}\cup\{1/m:m\geq1\}$ and the map 
\[
 A : \{0\}\cup\{1/m:m\geq1\} \to SL_2(\C), \quad t \mapsto \begin{cases}
     I_2 & t = 0 \\
     I_2+ t N_{1/t} &  t = 1/m
 \end{cases},
\]
where for $m \in \mathbb{N}$
\[
 N_{3m}=\begin{pmatrix}0&1\\0&0\end{pmatrix},\quad
 N_{3m+1}=\begin{pmatrix}0&0\\-1&0\end{pmatrix},\quad
 N_{3m+2}= \begin{pmatrix}-1&1\\-1&1\end{pmatrix}.
\]
Following the proof of Proposition~\ref{prop:continuous-rank-two}, we get the lower bound 4. 

For $n\geq3$, every matrix in $SL_n(\C)$ is a product of 4 involutions by Gustafson--Halmos--Radjavi \cite{MR0399284}, and for $n \neq 4$ this bound is sharp. Indeed, consider the example $\lambda I_n$ with $\lambda^n = 1$: if $\lambda I_n$ were a product of three involutions $J_1, J_2, J_3$, then $J_1 J_2 =  \lambda J_3$. A product of two involutions is similar to its inverse, thus $\lambda J_3$ is similar to $\lambda^{-1}J_3$. But then $\lambda^4 = 1$, which contradicts $\lambda^n=1$. 

By Kn{\" u}ppel and Nielsen \cite{MR1112667}, every matrix in $SL_4(\C)$ is a product of 3 involutions. For $d=0$, however, we can consider the following example, which is not a product of 3 involutions. Let $N_1, N_2, \dots, N_{15}$ denote the standard basis $E_{ij}$ of the Lie algebra $\mathfrak{sl}_4$ and set $N_{m+15}=N_m$ for $m \in \N$. Consider the null-homotopic continuous map 
\[
 A : \{0\}\cup\{1/m:m\geq1\} \to SL_4(\C), \quad t \mapsto \begin{cases}
     I_4 & t = 0 \\
     I_4 + t N_{1/t} &  t = 1/m
 \end{cases}.
\]
Assume that $A = J_1 J_2 J_3$ is a product of 3 continuous involutions. Then each $AJ_k$, for $k=1,2,3$, is a product of 2 involutions and thus similar to its inverse $J_k A^{-1}$. It follows that $\mathrm{tr} ((A-A^{-1})J_k)=0$ and pointwise $\mathrm{tr}(N_m J_k(1/m))=0$. Taking the limit along each subsequence $\{ m + 15 j\}_{j \in \N }$, we get $\mathrm{tr}(N_m J_k(0)) = 0$. Since $\{N_j\}$ form a basis of $\mathfrak{sl}_4$ whose orthogonal complement with respect to the trace pairing consists of multiples of the identity, $J_k(0) = \pm I_4$, since it is an involution. 

By continuity, $J_k(1/m)$ is close to $\pm I_4$ for large $m$, and thus $J_k(1/m) \mp I_4$ is invertible. Since $J_k$ is involutive, 
\[
    (J_k(1/m) + I_4 ) ( J_k(1/m) - I_4 ) = 0.
\]
Multiplying by the inverse then gives $J_k(1/m) = \pm I_4$ for large $m$, contradicting the definition of $A$.

\subsection{The holomorphic decomposition} \label{sufficient-holo}
In this section $X$ denotes a reduced Stein space and $f : X \to SL_n(\C)$ is a holomorphic map; equivalently, we consider $f$ as an element of $SL_n (\holo(X))$. 

Since holomorphic maps are continuous, the necessity of null-homotopy for writing $f$ as a product of holomorphic involutions follows from Proposition \ref{necessary}. For the sufficiency, we again use unitriangular factorization.

Manfred Klein and Karl Ramspott \cite[\S IV]{MR0966022} showed that every holomorphic map from a noncompact Riemann surface to $SL_n(\C)$ is a product of unitriangular factors. 
Gromov \cite{MR1001851} asked the following more general question: Does every holomorphic map $\C^m \to SL_n(\C)$ decompose into a finite product of holomorphic maps sending $\C^m$ into unipotent subgroups in $SL_n(\C)$?
He named it {\it the Vaserstein Problem}, after Vaserstein, who had established the analogous continuous factorization result. It was solved in greater generality by Ivarsson and Kutzschebauch. 

\begin{Thm}[Ivarsson--Kutzschebauch \cite{MR2874639}]\label{IK-SL}
    There exists a natural
    number $K = K(n, d)$ such that given any  reduced Stein space $X$ of finite complex dimension $d$, every null-homotopic holomorphic map $f : X \to SL_n(\C)$ can be written as a product of no more than $K$ unitriangular matrices with entries in $\holo (X)$. 
\end{Thm}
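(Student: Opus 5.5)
The plan is to follow the strategy of Ivarsson--Kutzschebauch: combine the continuous result (Theorem~\ref{vaserstein}) with the Oka principle for sections of stratified elliptic submersions (Gromov, Forstneri\v{c}). The holomorphic lifting problem becomes a continuous one, and the continuous one is already solved with a uniform bound. I would argue by induction on $n$ with a row-reduction step. For $X$ Stein of dimension $d$ and $f\colon X\to SL_n(\C)$ null-homotopic, I first try to find unitriangular matrices $U_1,\dots,U_k$ with entries in $\holo(X)$, $k$ depending only on $(n,d)$, such that the last row of $f U_1\cdots U_k$ is $(0,\dots,0,1)$. Two further unitriangular factors then reduce $fU_1\cdots U_k$ to a block matrix $\diag(f',1)$ with $f'\colon X\to SL_{n-1}(\C)$ null-homotopic. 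This gives a recursive bound $K(n,d)\le K(n-1,d)+k+2$, and the base case $SL_2$ is handled by the same mechanism.

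For the row step I would consider, for each $K$, the holomorphic map
\[
\Phi_K\colon (\C^{n(n-1)/2})^K\to \C^n\setminus\{0\},\qquad (Z_1,\dots,Z_K)\mapsto e_n^{T}\, M_1(Z_1)\cdots M_K(Z_K),
\]
where the $M_j$ run alternately through lower and upper unitriangular matrices parametrized by their off-diagonal entries. Finding $U_1,\dots,U_K$ amounts to finding a holomorphic $G\colon X\to(\C^{n(n-1)/2})^K$ with $\Phi_K(G(x))$ equal to a fixed expression in the last row of $f(x)$ (inverted appropriately), that is, a holomorphic section of the pulled-back map. Vaserstein's Theorem~\ref{vaserstein} supplies a continuous section once $K\ge L(n,d)$, because the null-homotopic map $f$ factors continuously with at most $L(n,d)$ factors. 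Grauert's Oka principle, noted in the introduction, keeps homotopy classes consistent.

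The core step, and the one I expect to be the main obstacle, is showing that $\Phi_K$ is a \emph{stratified elliptic submersion} over $\C^n\setminus\{0\}$ for $K$ large. I would first check that $\Phi_K$ is surjective. Then I would stratify $\C^n\setminus\{0\}$, and the source, by closed algebraic subvarieties so that on each stratum $\Phi_K$ is a submersion; this is needed because $\Phi_K$ fails to be submersive along degenerate loci, for instance where intermediate coordinates vanish. On each stratum I would build a dominating fiber spray from finitely many complete vector fields tangent to the fibers. These fields come from the shear flows $Z_j\mapsto Z_j+tW$ compensated in the adjacent factor, so that the product is unchanged; their flows are algebraic and complete. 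Establishing domination, meaning that these fields span the fiber tangent spaces at every point of each stratum, is the delicate computation. I would first try to do it for $n=2$ by explicit formulas, where the fibers are quite concrete, and then reduce the general case to this via the block structure.

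With the stratified elliptic submersion in hand, the Gromov--Forstneri\v{c} Oka principle deforms the continuous section from Vaserstein's theorem into a holomorphic one. The number of factors stays $K$, and $K$ depends only on $n$ and on the continuous bound $L(n,d)$, hence only on $(n,d)$. Iterating down to $SL_1$ yields the uniform bound $K(n,d)$.
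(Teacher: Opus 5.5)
The paper gives no proof of this theorem; it is quoted from Ivarsson--Kutzschebauch. Your outline reproduces their architecture: Vaserstein's continuous factorization, the Gromov--Forstneri\v{c} Oka principle applied to the last-row map $(Z_1,\dots,Z_K)\mapsto e_n^TM_1(Z_1)\cdots M_K(Z_K)$, and induction on $n$. As a proof, though, it has a genuine gap exactly at the step you flag. That step is the entire technical content of their work, and the mechanism you suggest for it does not work.

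\emph{Compensated shears do not exist.} Take $Z_j\mapsto Z_j+tW$ compensated in an adjacent factor. Adjacent factors have opposite type, so $M_j(Z_j')M_{j+1}(Z_{j+1}')=M_j(Z_j)M_{j+1}(Z_{j+1})$ makes $M_j(Z_j)^{-1}M_j(Z_j')=M_{j+1}(Z_{j+1})M_{j+1}(Z_{j+1}')^{-1}$ both upper and lower unitriangular, hence $I$. Letting both neighbours compensate does not help either: for $n=2$ the upper right entry of $L(a)U(b)L(c)$ is $b$ itself, so nothing moves unless $b=0$, where $L(a)U(0)L(c)=L(a+c)$.

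\emph{The fields actually needed are nonlinear.} Apart from trivial directions, complete fields tangent to the fibres of the last-row map are genuinely nonlinear. For $n=2$, $K=3$ (with $M_1$ lower triangular) the map is $(z_1+(z_1z_2+1)z_3,\ z_1z_2+1)$. A complete field tangent to its fibres is $z_1(z_1z_2+1)\partial_{z_1}-z_2(z_1z_2+1)\partial_{z_2}-z_1\partial_{z_3}$. Finding enough such fields and proving that they span the fibre tangent spaces over every stratum, for all $n$, is a long computation that you do not supply. The suggested reduction of general $n$ to $n=2$ ``via the block structure'' is only a hope: nothing shows that $SL_2$-type fields span these tangent spaces for $n\ge 3$.

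Three further steps need repair.

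\emph{Critical points must be removed, not stratified.} The Gromov--Forstneri\v{c} theorem concerns a holomorphic \emph{submersion} $h\colon Z\to X$ together with a stratification of the \emph{base}. You cannot stratify the source to absorb the points where the last-row map fails to be submersive. Those points (for $n=2$ with $M_1$ lower triangular, exactly $\{z_1=\dots=z_{K-1}=0\}$) must be removed from the total space. You then have to show that the map is still surjective off them and that the continuous section can be chosen to avoid them. Vaserstein's theorem gives no control over this, so an extra argument is needed, e.g.\ general position using that the codimension of this set grows with $K$, at the price of more factors.

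\emph{Null-homotopy of $f'$ needs justification.} It does not follow from null-homotopy of $\diag(f',1)$: $\pi_4(SL_2(\C))=\mathbb Z/2$ but $\pi_4(SL_3(\C))=0$, and the Stein quadric $\{z\in\C^5:\sum_i z_i^2=1\}\simeq S^4$ realizes this. The correct argument uses that the Oka principle connects Vaserstein's section $G$, for which $M_1(G_1)\cdots M_K(G_K)=f$, to the holomorphic section through sections $G^s$. Then $f\,\bigl(M_1(G^s_1)\cdots M_K(G^s_K)\bigr)^{-1}$ keeps last row $e_n^T$ and equals $I$ at $s=0$. Its upper left block is therefore a null-homotopy of $f'$.

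\emph{The continuous bound is $L(n,2d)$.} A $d$-dimensional complex space has covering dimension $2d$, so the continuous input is $L(n,2d)$ rather than $L(n,d)$. This does not affect uniformity.
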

Regarding the number of factors, we first have the Bass-stable-rank-one case, i.e.\ that of noncompact (possibly singular) Riemann surfaces, where $K(n,1)=4$ for all $n \ge 1$. Again  from \cite{MR0961333} we have 
\[
    K(n,d) \le K(2,d) \text{ for all } n \ge 2, \, d \ge 0 
\]
and $\displaystyle\lim_{n \to \infty} K(n,d) \le 6$ from Dennis--Vaserstein \cite[Theorem 20]{MR0961333}. 
Also $K(n,2)=5$ was proven in \cite{MR2869067}.

As in the continuous case, we deduce the following.
\begin{Cor}
 Let $K= K(n,d)$   be the number from  Theorem~\ref{IK-SL}. Then, for
 any reduced Stein space $X$ of finite complex dimension $d$, every null-homotopic holomorphic map $f : X \to SL_n(\C)$ can be written as a product of no more than  $4 (\left\lfloor K/2  \right\rfloor+ 1)$ involutions  with entries in $\holo (X)$.
 \end{Cor}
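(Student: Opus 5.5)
The plan mirrors the continuous case (Corollary~\ref{estimate}): reduce to the algebraic Proposition~\ref{proposition:e-t-estimate}, applied with $R=\holo(X)$, together with the four-involution result for unitriangular matrices.

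\textbf{Step 1 (factor into unitriangulars).} Take a null-homotopic holomorphic $f\colon X\to SL_n(\C)$. By Theorem~\ref{IK-SL}, $f=U_1U_2\cdots U_K$, where each $U_j$ is a unitriangular matrix with entries in $\holo(X)$. We may assume the $U_j$ alternate between upper and lower type: consecutive factors of the same type can be merged, which only lowers the count, and identity factors can be inserted to pad the list to exactly $K$ factors.

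\textbf{Step 2 (regroup into conjugates).} Apply the regrouping trick that precedes Proposition~\ref{proposition:e-t-estimate}. Write
\[
f=(U_1U_2U_1^{-1})\,(U_1U_3U_4U_3^{-1}U_1^{-1})\cdots,
\]
so that every pair of factors contributes one conjugate of a unitriangular matrix, with one remaining unitriangular tail. This exhibits $f$ as a product of at most $\lfloor K/2\rfloor+1$ conjugates, by matrices in $GL_n(\holo(X))$, of unitriangular matrices over $\holo(X)$. A short induction on $K$ formalizes the grouping.

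\textbf{Step 3 (each conjugate is four involutions).} The ring $\holo(X)$ is unital and contains the constants $\C$ as a central infinite field. So the proposition on unitriangular matrices applies: each unitriangular factor is a product of four involutions in $M_n(\holo(X))$. A conjugate of an involution is again an involution, since $(PJP^{-1})^2=I$. Hence each conjugate from Step~2 is a product of four holomorphic involutions, and $f$ is a product of at most $4(\lfloor K/2\rfloor+1)$ of them.

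\textbf{Main check.} The only point that needs care is that Lemma~\ref{BiSo} and the four-involution proposition really produce holomorphic matrices. The diagonal entries $\alpha_i$ are chosen as constants in $\C$, so $(a_1-a_i)^{-1}$ is a constant. The conjugating matrix $P$ is built from entries of $A$ by ring operations, so it is holomorphic and invertible, with inverse given by the same ring operations. Therefore all involutions produced are holomorphic, and nothing beyond the algebra is needed.
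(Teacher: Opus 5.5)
Your proposal is correct and follows the paper's route. You factor $f$ into at most $K$ unitriangular matrices via Theorem~\ref{IK-SL}, regroup them into $\lfloor K/2\rfloor+1$ conjugates of unitriangular matrices as in Proposition~\ref{proposition:e-t-estimate}, and apply the four-involution result over $\holo(X)$, which contains $\C$ as a central infinite field. Your remarks on forcing the factors to alternate in type and on the factors being holomorphic simply spell out what the paper leaves implicit in ``as in the continuous case.''
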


Let $I(n,d)_{\hol}$ denote the minimal number such that, for any Stein space $X$ of dimension $d$, every null-homotopic matrix 
$A \in SL_n (\holo (X))$ factorizes as a product of $I(n,d)_{\hol}$ holomorphic involutions. Then we have just shown $$I(n,d)_{\hol} \le 4 (\left\lfloor K(n,d)/2  \right\rfloor+ 1).$$

Similarly to the continuous case, using \cite{BrudSasa} we can deduce:
$$\lim_{n \to \infty} I(n,d)_{\hol} \le 16  \  \text{ and } \lim_{d \to \infty} I(n,d)_{\hol} = \infty.$$ 

\subsection{The exact numbers in the case \texorpdfstring{$n=2$ and $d=1$}{n=2 and d=1}}

For $n=2$, we can determine the exact numbers of involutive factors; they read
\begin{equation}\label{eq:two-widths}
 I(2,1)=4,\qquad I(2,1)_{\hol}=4.
\end{equation}
By contrast, on a connected smooth open Riemann surface every matrix in
$SL_2(\Ocal(X))$ is a product of two holomorphic involutions; see Proposition \ref{prop:smooth-holomorphic} below.
\bigskip

\begin{remark} By the discussion after inequality~\eqref{lower-bound}, we 
see that $I(2,0)=4$. Since a zero-dimensional Stein space is a discrete set of points, $I(n,0)_{\hol}$ is the number over the field $\C$; it equals $2$ for $n=2$, $3$ for $n=4$, and $4$ for all other $n$.
\end{remark}
We first have an elementary observation.

\begin{lemma}\label{lem:odd-to-even}
Let $m\geq1$, let $S$ be a topological space, and let
$A: S\to SL_2(\C)$ be continuous.  If $A$ is a product of $2m+1$
continuous involutions, then it is a product of $2m$ continuous involutions.
The same holds for products of holomorphic involutions on a reduced complex space.
\end{lemma}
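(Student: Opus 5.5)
The plan is to use the special structure of involutions in $SL_2$-size matrices: a $2\times 2$ matrix $J$ with $J^2=\Id$ either equals $\pm \Id$ (the classes $\Omega_0,\Omega_2$, determinant $+1$) or has eigenvalues $1,-1$ (the class $\Omega_1$, determinant $-1$). So an involution of determinant $+1$ is a scalar $\pm\Id$, and it can be absorbed into a neighbouring factor: if $J'$ is an involution, so is $-J'$.

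First I will set up the same locally constant data as in the proof of the Corollary to Proposition~\ref{necessary}. Write $A=J_1\cdots J_{2m+1}$ with continuous involutions $J_i:S\to Inv_2$. Each function $k_i(s)=(2-\mathrm{tr}\,J_i(s))/2\in\{0,1,2\}$ is continuous and integer-valued, hence locally constant. Therefore the fibres of $(k_1,\dots,k_{2m+1})$ form a finite partition $S=S_1\,\dot\cup\cdots\dot\cup\, S_r$ into open and closed subsets. On a fixed piece $S_j$, each $J_i$ lies in a single class $\Omega_{k_i}$.

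The key counting step is the following. Since $\det A=1$, the number of indices with $k_i=1$ (determinant $-1$) is even on $S_j$. Because $2m+1$ is odd, there is at least one index $i_j$ with $k_{i_j}\in\{0,2\}$. Then $J_{i_j}\equiv \varepsilon_j\Id$ on $S_j$ with a constant sign $\varepsilon_j=\pm1$.

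On $S_j$ I delete the factor $J_{i_j}$ and replace a neighbouring factor by $\varepsilon_j$ times itself. Concretely, I take $J_{i_j+1}$ if $i_j<2m+1$, and $J_{i_j-1}$ otherwise; a neighbour exists because $m\ge1$. This factor is still an involution, and $\varepsilon_j$ is central, so the product is unchanged. Relabelling the remaining factors in order gives $A=J'_1\cdots J'_{2m}$ on $S_j$, with each $J'_l$ continuous on $S_j$.

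Finally I glue. I define $J'_l$ on all of $S$ piecewise by the choices made on each $S_j$. Since the $S_j$ are finitely many clopen sets, each $J'_l$ is continuous on $S$, and it is pointwise an involution. The identity $A=J'_1\cdots J'_{2m}$ holds on every piece, hence on $S$. In the holomorphic case on a reduced complex space the same argument applies verbatim: the $k_i$ are still locally constant, and gluing holomorphic maps along a partition into open-closed sets preserves holomorphy.

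The only point needing care, and the main potential obstacle, is that the index of the removed factor depends on the piece. This is harmless exactly because the pieces are clopen, so no compatibility across boundaries is required.
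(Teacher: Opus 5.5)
Your proof is correct and follows the same idea as the paper. An odd number of factors together with $\det A=1$ forces some factor to have determinant $+1$; that factor must equal $\pm I_2$ and can be absorbed, since the negative of an involution is again an involution. Your decomposition of $S$ into the clopen fibres of $(k_1,\dots,k_{2m+1})$ is a genuine improvement in rigour: you choose the removed index piece by piece and push the sign $\varepsilon_j$ into a neighbouring factor. The paper's brief ``by continuity'' argument tacitly uses a single index $i$ on all of $S$, which needs $S$ connected, and it does not spell out the absorption of $-I_2$.
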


\begin{proof}
    By assumption $A=J_1\cdots J_{2m+1}$ is a product of an odd number of involutions. Since $\det A = 1$ and $\det J_i \in \{1, -1\}$, we have by continuity that $\det J_i = 1$ for some $i = 1, 2, \dots, m+1$. The only involutions in $SL_2(\C)$ are $I_2$ and $-I_2$. Hence, by continuity, $J_i = I_2$ or $J_i = -I_2$. 
\end{proof}

\begin{proposition}\label{prop:continuous-rank-two}
For $n=2$ and $d=1$ one has
$I(2,1)=4$.
\end{proposition}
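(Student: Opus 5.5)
For the upper bound I will not go through Corollary~\ref{estimate}, since $L(2,1)=4$ only gives $4(\lfloor 4/2\rfloor+1)=12$; instead I will decompose directly. For the lower bound I will exhibit a null-homotopic map on a one-dimensional space that is not a product of two involutions, and then use Lemma~\ref{lem:odd-to-even} to exclude three.

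\emph{Upper bound.} The key observation is a pointwise-algebraic criterion. Let $M=\begin{pmatrix}a&b\\c&d\end{pmatrix}\in SL_2(\mathcal C(T))$ and $J=\begin{pmatrix}x&y\\z&-x\end{pmatrix}$ with $x^2+yz=1$. Then $J$ is an involution of determinant $-1$, and $MJ$ has determinant $-1$. So $MJ$ is an involution iff $\mathrm{tr}(MJ)=0$, i.e.
\[
(a-d)x+bz+cy=0 .
\]
In that case $M=(MJ)J$ is a product of two involutions. If $c$ is invertible in $\mathcal C(T)$, the choice $x=1$, $z=0$, $y=-(a-d)/c$ works. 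Hence every matrix with invertible lower-left entry is a product of two continuous involutions.

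Now take a null-homotopic $A$ with first column $(a,c)$. I will look for $s\in\mathcal C(T)$ such that both $s$ and $c+sa$ are nowhere zero. Then
\[
A=\begin{pmatrix}1&0\\-s&1\end{pmatrix}\Bigl(\begin{pmatrix}1&0\\s&1\end{pmatrix}A\Bigr),
\]
and both factors have invertible lower-left entries ($-s$ and $c+sa$ respectively), so $A$ is a product of four involutions. This is the step I expect to be the main obstacle, and I would treat it as a general-position problem in covering dimension $1$. For each $t$, the forbidden values of $s(t)$ are $0$ and, if $a(t)\ne0$, the single point $-c(t)/a(t)$; when $a(t)=0$ we have $c(t)\neq0$ by unimodularity. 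So we need a continuous section of a "bundle'' whose fibres are $\C$ minus at most two points. These forbidden sets have real codimension $2$, while $\dim T=1$, so I would first take a continuous $s$ with $c+sa$ invertible (Bass stable rank one), then perturb it by a small continuous function to avoid the zero set, arguing via nerve/partition-of-unity approximation on a one-dimensional cover. The holomorphic analogue is not needed here.

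\emph{Lower bound.} Let $T=[0,1]$, which is contractible, so every map is null-homotopic. Define $A(t)=I_2+tN(t)$ for $t>0$ and $A(0)=I_2$, where $N(t)$ is a continuous path through the three nilpotents
\[
E_{12},\qquad E_{21},\qquad \begin{pmatrix}-1&1\\-1&1\end{pmatrix},
\]
visiting each infinitely often as $t\to0$. The map $A$ is continuous because $tN(t)\to0$.

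Suppose $A=J_1J_2$ with $J_1,J_2$ continuous involutions. Since $A\not\equiv\pm I_2$ near $0$, continuity forces both $J_i$ to have determinant $-1$ near $0$. Then $\mathrm{tr}(AJ_2)=0$, which gives $\mathrm{tr}(N(t)J_2(t))=0$ for $t>0$. Passing to the limit along the three subsequences yields
\[
z=0,\qquad y=0,\qquad -2x+z-y=0
\]
for $J_2(0)$. Thus $J_2(0)=0$, a contradiction. Hence $A$ is not a product of two involutions, and by Lemma~\ref{lem:odd-to-even} not of three either, so $I(2,1)\ge4$.
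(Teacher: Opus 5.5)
Your proof is correct. The lower bound is essentially the paper's; the upper bound takes a different, more self-contained route.

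\textbf{Lower bound.} You use the same space $[0,1]$ and the same three limiting nilpotents as the paper. Where the paper derives the anticommutation $JN+NJ=0$ from $JAJ=A^{-1}$, you use the trace condition $\mathrm{tr}(N(t)J_2(t))=0$, which comes from $\mathrm{tr}(AJ_2)=\mathrm{tr}\,J_1=0$. The paper's version is slightly shorter because it needs no determinant discussion beforehand. You should state explicitly that $N(t)$ is nilpotent for every $t>0$ and bounded, for instance $N(t)=N(1/t)$ with the paper's $N(\theta)$. Without nilpotency $A(t)\notin SL_2(\C)$, and without boundedness continuity at $0$ fails.

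\textbf{Upper bound.} The paper cites the Vavilov--Smolensky--Sury factorization $A=L(g_1)U(g_2)L(g_3)U(g_4)$ over a ring of stable rank one and regroups it as $(L(g_1)D)(DU(g_2))(L(g_3)D)(DU(g_4))$. You instead prove directly, via the trace criterion for $MJ$, that a matrix in $SL_2(\mathcal{C}(T))$ with invertible lower-left entry is a product of two involutions. You then reach that situation with one stable-rank-one step, choosing $s$ with $c+sa$ a unit. Your route is self-contained and gives the stronger intermediate fact about matrices with a unit corner. The paper's route is shorter given the citation, and the same regrouping carries over verbatim to the holomorphic case via Ivarsson--Kutzschebauch.

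\textbf{The step you flagged as the main obstacle is not needed.} There is no reason for $s$ to be nowhere zero, because $L(-s)=(L(-s)D)\,D$ with $D=\diag(1,-1)$ is a product of two involutions for every $s$. In your own criterion, $L(-s)$ has $a=d=1$ and $b=0$, so $x=1$, $y=z=0$ (that is, $J=D$) satisfies $(a-d)x+bz+cy=0$ with no division by $s$. You can therefore drop the general-position argument entirely.

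If you did want to keep it, the ``nerve/partition of unity'' sketch would need to be made precise. One way is to set $\delta=|c+s_0a|/(1+|a|)$ and extend $s_0/|s_0|$ from the closed set $\{|s_0|\ge\delta/3\}$ to a map $T\to S^1$. Such an extension exists because $T$ is normal with $\dim T\le 1$.
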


\begin{proof}
For $\theta\in\mathbb R$ put
\[
 N(\theta)=
 \begin{pmatrix}
  -\sin\theta\cos\theta&\cos^2\theta\\
  -\sin^2\theta&\sin\theta\cos\theta
 \end{pmatrix}.
\]
Then $N(\theta)^2=0$.  We define a continuous null-homotopic map
by
\[
 A:[0,1]\to SL_2(\C), \quad t \mapsto \begin{cases}
     I_2 & t = 0 \\
     I_2+tN(1/t) & 0 < t \le 1
 \end{cases},
\]
which is homotopic to the constant identity map by $I_2+stN(1/t)$, $s\in [0,1]$. 

Suppose that $A=JK$ for continuous involutions $J,K : [0,1] \to SL_2(\C)$.  Then
$JAJ=A^{-1}$, and hence
\begin{equation}\label{eq:continuous-anticommute}
 J(t)N(1/t)+N(1/t)J(t)=0\quad \text{ for } t>0.
\end{equation}
Along sequences $\{ t_i \}$ tending to zero for which $1/t_i$ tends modulo $2\pi$ to
$0$, $\pi/2$, and $\pi/4$, respectively, continuity in
\eqref{eq:continuous-anticommute} shows that $J(0)$ anticommutes with
\[
 N(0)=\begin{pmatrix}0&1\\0&0\end{pmatrix},\qquad
 N(\pi/2)=\begin{pmatrix}0&0\\-1&0\end{pmatrix},\qquad
 2N(\pi/4)=\begin{pmatrix}-1&1\\-1&1\end{pmatrix}.
\]
Anticommutation with the
first two forces $J(0)=a\diag(1,-1)$, and anticommutation with the third then
yields $a=0$.  This contradicts $J(0)^2=I_2$.  Thus two factors do not suffice,
and Lemma~\ref{lem:odd-to-even} excludes three factors. Hence $I(2,1) \ge 4$.

For the reverse inequality, by Vaserstein \cite[Theorem 7]{Vaserstein1971} the Bass stable rank of the ring $\mathcal{C}(X,\C)$ of complex-valued continuous functions on a one-dimensional topological space $X$ is one. Thus, by Vavilov--Smolensky--Sury \cite[Lemma 1]{Vavilov}, every continuous map $A : X \to SL_2(\C)$ factorizes as 
\[
 A=L(g_1)U(g_2)L(g_3)U(g_4),
\]
where
\[
 L(g)=\begin{pmatrix}1&0\\g&1\end{pmatrix},\qquad
 U(g)=\begin{pmatrix}1&g\\0&1\end{pmatrix},
\]
then
\begin{equation}\label{eq:lulu-involutions-cont}
 A=(L(g_1)D)(DU(g_2))(L(g_3)D)(DU(g_4)) \quad \text{ with } D=\diag (1, -1).
\end{equation}
These factors are involutive, since $DL(g)D=L(g)^{-1}$ and $DU(g)D=U(g)^{-1}$.
\end{proof}

The same trick also gives an upper bound in the holomorphic case, which turns out to be optimal.

\begin{proposition}\label{prop:singular-holomorphic}
For $n=2$ and $d=1$ one has $I(2,1)_{\hol}=4$.
\end{proposition}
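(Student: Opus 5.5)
The proof has two halves. For the upper bound $I(2,1)_{\hol}\le 4$ we repeat the argument of Proposition~\ref{prop:continuous-rank-two}. For the lower bound we adapt its counterexample to a singular Riemann surface, with the accumulation point replaced by a singular point. The smooth case cannot serve here, since there two factors already suffice by Proposition~\ref{prop:smooth-holomorphic}.

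\emph{Upper bound.} Let $X$ be a reduced Stein space of dimension $1$, i.e.\ a possibly singular noncompact Riemann surface, and let $f\in SL_2(\holo(X))$ be null-homotopic. (In fact every map is null-homotopic here: $X$ has the homotopy type of a $1$-dimensional CW complex and $SL_2(\C)$ is simply connected, but we only need the hypothesis.)
\begin{enumerate}
\item As recalled after Theorem~\ref{IK-SL}, $\holo(X)$ has Bass stable rank one and $K(2,1)=4$.
\item By the same Vavilov--Smolensky--Sury lemma used in Proposition~\ref{prop:continuous-rank-two}, we therefore get $f=L(g_1)U(g_2)L(g_3)U(g_4)$ with $g_i\in\holo(X)$.
\item Inserting $D=\diag(1,-1)$ exactly as in \eqref{eq:lulu-involutions-cont} writes $f$ as a product of four holomorphic involutions.
\end{enumerate}

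\emph{Lower bound.} Let $X\subset\C^3$ be the union of the three coordinate axes. This is a reduced one-dimensional Stein space, as a closed analytic subvariety of $\C^3$, singular at the origin. Take $N_1, N_2, N_3$ to be the three nilpotents from the proof of Proposition~\ref{prop:continuous-rank-two}:
\[
N_1=\begin{pmatrix}0&1\\0&0\end{pmatrix},\quad N_2=\begin{pmatrix}0&0\\-1&0\end{pmatrix},\quad N_3=\begin{pmatrix}-1&1\\-1&1\end{pmatrix},
\]
and define $A(x,y,z)=I_2+xN_1+yN_2+zN_3$ restricted to $X$.
\begin{enumerate}
\item On the $k$-th axis, with coordinate $w$, we have $A=I_2+wN_k$. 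Since $N_k^2=0$, $\det A=1$, so $A\in SL_2(\holo(X))$.
\item $A$ is null-homotopic via $I_2+s(xN_1+yN_2+zN_3)$, $s\in[0,1]$.
\item Suppose $A=JK$ with holomorphic involutions $J,K$. Then $JAJ=A^{-1}$. On the $k$-th axis $A^{-1}=I_2-wN_k$, so $w\,(JN_k+N_kJ)=0$ there.
\item For $w\neq0$ this gives $JN_k+N_kJ=0$. Letting $w\to 0$, continuity gives that $J(0)$ anticommutes with $N_1,N_2,N_3$.
\item As in Proposition~\ref{prop:continuous-rank-two}, anticommuting with $N_1$ and $N_2$ forces $J(0)=a\diag(1,-1)$, and anticommuting with $N_3$ then forces $a=0$. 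This contradicts $J(0)^2=I_2$, so two factors do not suffice.
\item Lemma~\ref{lem:odd-to-even}, in its holomorphic version, rules out three factors. A single factor is impossible, since $A$ is not an involution. Hence $I(2,1)_{\hol}\ge 4$.
\end{enumerate}

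The main obstacle is choosing the singular space so that $A$ really is a global holomorphic function while forcing $J(0)$ to anticommute with three independent nilpotents. Using coordinate axes in $\C^3$ settles this, because the restriction of a linear map on $\C^3$ is automatically holomorphic on $X$. The delicate point is the continuity limit at the singular point, which replaces the accumulating sequences used in the continuous case.
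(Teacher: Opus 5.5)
Your proof is correct. The upper bound matches the paper's: you derive the $LULU$ factorization from Bass stable rank one together with the Vavilov--Smolensky--Sury lemma, while the paper cites Ivarsson--Kutzschebauch's Theorem~5.1 directly. Both then insert $D=\diag(1,-1)$.

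The lower bound takes a genuinely different route. You take the union of the three coordinate axes in $\C^3$ and restrict $I_2+xN_1+yN_2+zN_3$ to it. The contradiction then comes purely from continuity of $J$ at the singular point, exactly as in the proof of Proposition~\ref{prop:continuous-rank-two}. The paper instead uses the cusp $w^2=z^3$. It shows, by a computation in $\C\{t^2,t^3\}$, that $A=I_2+B$ is not even locally holomorphically similar to $A^{-1}$.

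Your route is more elementary. Since it never uses holomorphy, it also gives a simpler continuous counterexample: a compact tripod instead of the oscillating path on $[0,1]$. The price is that your obstruction is topological, because your $A$ is not even a product of two continuous involutions on the tripod.

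The paper's cusp is unibranch and homeomorphic to $\C$ via $t\mapsto(t^2,t^3)$. Pulling $A$ back to this normalization and applying Proposition~\ref{prop:smooth-holomorphic} shows that $A$ is a product of two continuous involutions on $X_{\mathrm c}$. So on the cusp the failure is genuinely analytic, caused by the non-normal local ring. This makes the paper's contrast with the smooth case sharper: even a topologically trivial singularity raises the count from $2$ to $4$.
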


\begin{proof}
By Ivarsson and Kutzschebauch \cite[Theorem~5.1]{IKnumber}, every holomorphic
$A: X\to SL_2(\C)$ on a one-dimensional Stein space admits a factorization
\[
 A=L(g_1)U(g_2)L(g_3)U(g_4)
\]
with $g_j\in\Ocal(X)$. The same argument leading to Equation
\eqref{eq:lulu-involutions-cont} gives 4 holomorphic involutions.

To see that fewer factors do not work, consider the affine cusp
\[
 X_{\mathrm c}=\{(z,w)\in\C^2:w^2=z^3\}
\]
and the holomorphic matrices
\[
 B=\begin{pmatrix}-w&z\\-z^2&w\end{pmatrix},
 \qquad A=I_2+B.
\]
The equation of the cusp gives $B^2=0$, so $A\in SL_2(\Ocal(X_{\mathrm c}))$
and $A^{-1}=I_2-B$.  Moreover, $A$ is null-homotopic through the holomorphic
family $A_s=I_2+sB$, $0\leq s\leq1$.

At the cusp, the local ring is $\Ocal_{X_{\mathrm c},0}\cong\C\{t^2,t^3\}$ with $z=t^2, w=t^3$. 
If $A$ and $A^{-1}$ were locally holomorphically similar, there would exist an
invertible
\[
 H=\begin{pmatrix}p&q\\r&s\end{pmatrix}\in GL_2(\Ocal_{X_{\mathrm c},0}) \, \text{ such that } BH+HB=0.
\]
Direct computation gives $s=-p$ and $r=2tp+t^2q$.
Since $p,q,r\in\C\{t^2,t^3\}$, the second identity implies
$tp\in\C\{t^2,t^3\}$.  This ring contains only terms of order at least 2, so
$p(0)=0$. Consequently, $p,r\in (t^2,t^3)$.  It follows that $\det H=-p^2-qr\in (t^2,t^3)$, 
contrary to the invertibility of $H$.  

Hence, $A$ is not locally
holomorphically similar to $A^{-1}$ at the cusp;
in particular, it cannot be a
product of two involutions, since $A=J J'$ would imply $JAJ=A^{-1}$.
Lemma~\ref{lem:odd-to-even}, with $m=1$, excludes 3 factors, and the lower
bound 4 follows.
\end{proof}

In the absence of singularities, the number of involutive factors drops from 4 to 2.

\begin{proposition}\label{prop:smooth-holomorphic}
Let $X$ be a connected smooth open Riemann surface.  Every matrix  in  $SL_2(\Ocal(X))$ is a product of two holomorphic involutions in
$GL_2(\Ocal(X))$. 
\end{proposition}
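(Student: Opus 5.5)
The plan is to reduce the statement to finding a single holomorphic involution $J$ with $JAJ=A^{-1}$. Once we have it, $A=(AJ)\,J$, and $(AJ)^2=AJAJ=AA^{-1}=I_2$, so $A$ is a product of the two holomorphic involutions $AJ$ and $J$.

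\emph{Reduction to a traceless anticommutation problem.} Write $A=\begin{pmatrix}a&b\\c&d\end{pmatrix}$ and $B_0=A-\tfrac12(\mathrm{tr}A)I_2$, which is traceless. Since $A^{-1}=(\mathrm{tr}A)I_2-A$, the condition $JAJ=A^{-1}$ with $J^2=I_2$ is equivalent to $JB_0+B_0J=0$. We look for $J$ of the traceless form $\begin{pmatrix}x&y\\z&-x\end{pmatrix}$. For traceless $2\times2$ matrices one has $JB_0+B_0J=\mathrm{tr}(JB_0)I_2$, and $J^2=(x^2+yz)I_2$. So we must find $x,y,z\in\Ocal(X)$ with
\[
 (a-d)x+cy+bz=0,\qquad x^2+yz=1 .
\]
If $B_0\equiv0$, then $A=\pm I_2$ is itself an involution and we are done. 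Otherwise, here smoothness enters. On a smooth open Riemann surface the Weierstrass theorem gives a greatest common divisor: there is $g\in\Ocal(X)$ with $(a-d,c,b)=g\,v'$ and $v'=(v_1,v_2,v_3)$ without common zeros. The linear condition is then equivalent to $v'\cdot(x,y,z)=0$, since $\Ocal(X)$ is a domain for connected $X$. This is exactly the step that fails at the cusp in Proposition~\ref{prop:singular-holomorphic}, where the vector $(-2w,-z^2,z)$ cannot be divided to a nowhere-vanishing one.

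\emph{Finding the section.} For each $p\in X$ consider the fibre
\[
 Z_p=\{(x,y,z)\in\C^3:\ v'(p)\cdot(x,y,z)=0,\ x^2+yz=1\}.
\]
Since $v'(p)\ne0$, $Z_p$ is the intersection of the smooth quadric $\{x^2+yz=1\}$ with a plane through the origin. If the plane is nondegenerate for the quadratic form, $Z_p$ is a smooth conic isomorphic to $\C^*$. If the plane is tangent to the null cone, which happens exactly where $v'$ is isotropic, i.e.\ $B_0(p)$ is nilpotent, the restricted form has rank one but is nonzero, so $Z_p$ is isomorphic to $\C$ (possibly two parallel lines, i.e.\ $\C\sqcup\C$). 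One checks that the family $Z=\bigcup_p Z_p\to X$ is a stratified holomorphic submersion whose fibres are Oka. Then Forstnerič's Oka principle for stratified fibrations, or alternatively Grauert's Oka principle for the homogeneous structure given by the $SO_3(\C)$-action preserving the form, reduces the existence of a holomorphic section to the existence of a continuous one. Since $X$ is homotopy equivalent to a one-dimensional CW complex (a graph), a continuous section exists as soon as the fibres are nonempty, which is automatic, and the monodromy along loops causes no obstruction on the $1$-skeleton.

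\emph{Main obstacle and a fallback.} The hard step is making this Oka-principle argument rigorous: the fibre type jumps between $\C^*$ and $\C$ (or $\C\sqcup\C$) over the discrete set where $\det B_0'$ vanishes, so one must verify the stratified hypotheses carefully. If that becomes delicate, I would try a more algebraic route. Since $\Ocal(X)$ is a Bézout domain of Bass stable rank one in which unimodular rows complete to invertible matrices, I would look for a vector $e$ such that $e$ and $B_0'e$ form a basis, i.e.\ such that the binary quadratic form $\det(e,B_0'e)$, whose coefficients are coprime, takes a nowhere-vanishing value. Conjugating by $(e,B_0'e)$ puts $B_0'$ in companion form $\begin{pmatrix}0&\delta\\1&0\end{pmatrix}$, which anticommutes with $\diag(1,-1)$. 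Conjugating back gives $J$.
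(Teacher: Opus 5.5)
\textbf{There is a genuine gap: the existence of a global holomorphic $J$ is never proved.} Your reduction is correct and is essentially the paper's. A holomorphic involution $J$ anticommuting with $B_0$ gives $A=(AJ)J$; the paper's $J$ is $PDP^{-1}$. Your gcd step is the paper's B\'ezout step. But the only nontrivial point of the proposition is producing such a $J$ \emph{globally}, and this is not established.

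Locally it is easy. At each point $C=B_0/g$ is a nonzero traceless matrix, hence cyclic. A local cyclic vector $e$ gives $P_U=(e,Ce)$ with $P_U^{-1}CP_U=C_q:=\begin{pmatrix}0&q\\1&0\end{pmatrix}$, where $q=-\det C$, and $C_q$ anticommutes with $\diag(1,-1)$. The paper passes from local to global with Guralnick's local-to-global similarity theorem over $\Ocal(X)$.

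Your fallback only restates the problem. A global $e$ with $\det(e,Ce)\in\Ocal(X)^*$ is the same thing as a global $P=(e,Ce)$ with $P^{-1}CP=C_q$. B\'ezout, stable rank one and completability of unimodular rows do not, as far as you show, imply that the primitive binary quadratic form $e\mapsto\det(e,Ce)$ takes a nowhere-vanishing value.

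\textbf{The Oka route is not a proof as it stands.}
\begin{enumerate}
\item Grauert's theorem does not apply, since $Z\to X$ is in general not locally trivial. You would need Forstneri\v{c}'s Oka principle for stratified holomorphic fibre bundles with Oka fibres, and its hypotheses (for example holomorphic local triviality over $\{q\neq0\}$) are only asserted.
\item Your continuous-section argument is wrong. Nonempty fibres do not give sections over a graph: a connected double cover of a circle has none. What is needed is connected fibres. Since $Z\to X$ is not a fibration, you also cannot simply restrict to a spine of $X$.
\item Your fibre description is off. Over $\{q=0\}$ the restricted form is $\ell^2$ with $\ell\neq0$, so the fibre is always $\{\ell=\pm1\}\cong\C\sqcup\C$, never $\C$. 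Moreover $\{q=0\}$ need not be discrete: for unipotent $A$ one has $q\equiv0$ and every fibre is disconnected.
\end{enumerate}

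To finish along these lines you would have to show three things. First, that local holomorphic sections exist everywhere (via $P_U$ above). Second, that the two components over $\{q=0\}$ are globally labelled, for instance by the eigenvalue of $J$ on $\ker C$. Third, that the remaining extension over the complement of small discs around $\{q=0\}$ is unobstructed, which uses that $X$ is noncompact. Only then would the stratified Oka principle yield a holomorphic $J$. Completed this way, your argument would be a genuinely different, Oka-theoretic substitute for Guralnick's theorem.
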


\begin{proof}
Since a connected open Riemann surface $X$ has the homotopy type
of a one-dimensional CW-complex, whereas $SL_2(\C)\simeq S^3$ is simply
connected, every holomorphic map $A : X \to SL_2(\C)$ is null-homotopic as a continuous map and thus null-homotopic as a holomorphic map by Grauert's Oka principle.

Set
\[
 \tau=\frac{\operatorname{tr}A}{2},\qquad
 B=A-\tau I_2=\begin{pmatrix}e&b\\c&-e\end{pmatrix}.
\]
By the Cayley--Hamilton identity for $A$, we have
\begin{equation}\label{eq:CH-traceless}
 B^2=(\tau^2-1)I_2.
\end{equation}
If $B=0$, then $A=\pm I_2$, and $A=D(DA)$ is a product of two involutions. Here $D = \diag(1,-1)$.

Assume $B\neq0$.  By Wedderburn's lemma, $\mathcal{O}(X)$ is a B{\'e}zout
domain.  Hence $(e,b,c)=g \mathcal{O}(X)$ for some nonzero
$g\in \mathcal{O}(X)$ and we can write
\[
 B=gC,\qquad C=\begin{pmatrix}e_0&b_0\\c_0&-e_0\end{pmatrix},
 \qquad (e_0,b_0,c_0)=\mathcal{O}(X).
\]
Thus $C(x)\neq0$ for every $x\in X$.  Set
\[
 q=e_0^2+b_0c_0=-\det C,
 \qquad C_q=\begin{pmatrix}0&q\\1&0\end{pmatrix}.
\]
Then $C^2=qI_2$.  At every $x\in X$, the nonzero traceless matrix $C(x)$ is
non-scalar and therefore cyclic.  Choosing a local cyclic vector $v$, the
matrix $P_U=(v,Cv)$ is holomorphically invertible near $x$ and satisfies
\[
 P_U^{-1}CP_U=C_q.
\]
Thus $C$ and $C_q$ are locally holomorphically similar.  Guralnick's
local-to-global similarity theorem \cite{Guralnick} gives
$P\in GL_2(\Ocal(X))$ with $P^{-1}CP=C_q$.  From
\eqref{eq:CH-traceless} we obtain $\tau^2-1=g^2q$, and hence
\[
 A_0:= P^{-1}AP = \tau I_2 + g C_q
   = \begin{pmatrix}\tau&gq\\g&\tau\end{pmatrix}.
\]
Since $\det A_0=1$, we get $DA_0D=A_0^{-1}$. 
Therefore
\begin{align*}
 A=(PDP^{-1})\bigl(P(DA_0)P^{-1}\bigr),
\end{align*}
where both factors are holomorphic
involutions of determinant $-1$.
\end{proof}


\end{document}